\title{ WEIGHTED  SEMIGROUP MEASURE ALGEBRA AS A ${\rm WAP}$-ALGEBRA}
\author{H.R. Ebrahimi Vishki}
\address{$^1$Department of Pure Mathematics, Center of Excellence in Analysis on Algebraic Structures (CEAAS), Ferdowsi University of Mashhad, %P.O. Box 1159, Mashhad 91775,
 IRAN, e-mail: {\tt vishki@um.ac.ir}}
\author{B. Khodsiani}
\address{$^2$ Corresponding author, Department of Mathematics, University of Isfahan, Isfahan, IRAN. e-mail: {\tt ${\texttt {b}}_{-}$khodsiani@sci.ui.ac.ir}}
\author{A. Rejali}
\address{$^3$Department of Mathematics, University of Isfahan, Isfahan, IRAN. e-mail: {\tt rejali@sci.ui.ac.ir}}
\begin{document}
\pagestyle{headings}
\maketitle
\begin{abstract}
{\it \quad\qu  A Banach algebra $\A$ for which the natural embedding $x\mapsto \hat{x}$ of $\A$ into $WAP(\A)^*$ is bounded below; that is, for some $m\in \mathbb{R}$ with $m>0$ we have $||\hat{x}||\geq m||x||$, is called a WAP-algebra. Through we mainly concern with weighted measure algebra $M_b(S,\omega),$ where   $\omega$ is a weight on a  semi-topological semigroup $S$. We study those conditions under which  $M_b(S,\omega)$ is a WAP-algebra (respectively  dual Banach algebra). In particular, $M_b(S)$ is a WAP-algebra (respectively  dual Banach algebra) if and  only if $wap(S)$ separates the points of $S$ (respectively $S$  is {\it compactly cancellative semigroup}).   We  apply our results for  improving some older results in the case where $S$ is discrete.}
\end{abstract}
%%%%% Keywords
\begin{Keywords}
{  WAP-algebra, dual Banach
algebra, Arens regularity, weak almost
periodicity.}
\end{Keywords}

%%%%%%% 2000 Mathematics Subject Clasification:
\begin{MSC2010}
43A10, 43A20, 46H15, 46H25.
\end{MSC2010}

%\setcounter{equation}{0}
%%%%%%%%%%%%
%%%%%%%%%%%%%%%%%%%%%%%%%%%%%%%%%%%%%%%%%%%%%%%%%%%%%%%
\section{Introduction and Preliminaries}
Throughout this paper,  we study those conditions under which $M_b(S,\omega)$ is either a ${\rm WAP}$-algebra or a dual Banach algebra.  Our main result in section 2   is that  for a locally compact topological semigroup and a continuous weight $\omega$ on $S$,  the measure algebra $M_b(S,\omega)$ is a dual Banach algebra with respect to  $C_0(S, 1/{\omega})$ if and only if for all compact subsets $F$ and $K$ of $S$ , the maps $\frac{{\displaystyle \chi}_{F^{-1}K}}{\omega}$ and $\frac{{\displaystyle \chi}_{KF^{-1}}}{\omega}$ vanishes at infinity. This  improved the result of   Abolghasemi,  Rejali, \and Ebrahimi Vishki \cite{ARV} to include the case where $S$ is not necessarily discrete. As a consequence in non-weighted case, we conclude for a locally compact topological semigroup $S$, the measure algebra $M_b(S)$ is a dual Banach algebra with respect to  $C_0(S)$ if and only if $S$ is a compactly cancellative semigroup. The later result improved the well known result of   Dales,   Lau and  Strauss \cite[Theorem4.6]{DLS}, $\ell_1(S)$ is dual Banach algebra with respect to $c_0(S)$ if and only if $S$ is weakly cancellative semigroup.

Section 3 is devoted to study  $WAP$-algebras on a semigroup $S$. For every weighted locally compact  semi-topological semigroup $(S,\omega)$, $M_b(S,\omega)$ is a ${\rm WAP}$-algebra if and only if 
 the evaluation map $\epsilon:S\longrightarrow \tilde {X}$ is   one to one, where $\tilde{X}=MM(wap(S,1/\omega))$.
  Our main result  of this section is that $M_b(S)$  is ${\rm WAP}$-algebra if and only if $wap(S)$ separate the points of $S$.  If $C_0(S,1/\omega)\subseteq wap(S,1/\omega)$ then $wap(S,1/\omega)$ separate the points of $S$. Thus $M_b(S,\omega)$ is a ${\rm WAP}$-algebra. We may ask whether, if  $M_b(S,\omega)$  is a ${\rm WAP}$-algebra then $C_0(S,1/\omega)\subseteq wap(S,1/\omega)$. We answer to this question negatively by a counter example. Then we exhibit some necessary and sufficient condition for $c_0(S)\subseteq wap(S)$. we end the paper by some examples which show that our results cannot be improved.

The dual $\A^*$ of a Banach algebra $\A$ can be turned into a Banach $\A-$module in a natural way, by setting

$$\langle  f\cdot a, b\rangle=\langle f ,ab\rangle \ \ \ \ \  {\rm and}    \ \ \ \ \langle a\cdot f, b\rangle=\langle f, ba\rangle\ \ \ \ (a,b\in \A, f\in \A^*).$$

A {\it dual Banach algebra} is a Banach algebra $\A$ such that $\A=(\A_*)^*$, as a Banach space,
for some Banach space $\A_*$, and such that $\A_*$ is a closed $\A-$submodule of $\A^*;$ or equivalently,
the multiplication on $\A$ is separately weak*-continuous. We call $\A_*$ the predual of $\A$. It should be remarked that the predual of a dual Banach algebra need not be unique, in general (see \cite{D, DPW}); so we usually point to the involved predual of a dual Banach algebra.

A functional $f\in \A^*$ is said to be {\it weakly almost periodic} if $\{f\cdot a: \|a\|\leq 1\}$
 is relatively weakly  compact in $\A^*$. We denote by $WAP(\A)$ the set of all weakly
almost periodic elements   of $\A^*.$ It is easy to verify that, $WAP(\A)$ is a (norm) closed subspace of $\A^*$.

It is known that the multiplication of a Banach algebra $\A$ has    two natural but, in general, different extensions (called Arens products) to the second dual $\A^{**}$ each turning $\A^{**}$  into a Banach algebra. When these extensions are equal, $\A$ is said to be (Arens) regular. It can be verified that $\A$ is Arens regular if and only if $WAP(\A)=\A^*$. Further  information  for the Arens regularity of Banach algebras can be found in \cite{D, DL}.

WAP-algebras, as a generalization of the Arens regular algebras, has been introduced and intensively studied in \cite{Da2}. A Banach algebra $\A$ for which the natural embedding $x\mapsto \hat{x}$ of $\A$ into $WAP(\A)^*$  where $\hat{x}(\gamma)=\gamma(x)$ for $\gamma\in WAP(\A)$, is bounded below; that is, for some $m\in \mathbb{R}$ with $m>0$ we have $||\hat{x}||\geq m||x||$,   is called a WAP-algebra.  When $\A$ is Arens regular or dual Banach algebra, the natural embedding of $\A$ into $WAP(\A)^*$  is isometric \cite[Corollary4.6]{Runde}. Also Theorem \ref{mwap} shows that  $M_b(S,\omega)$ is a WAP-algebra if and only if this embedding  is isometric and of course  bounded below, however in general $M_b(S,\omega)$ is  neither Arens regular nor dual Banach algebra.  It has also known that $\A$ is a WAP-algebra if and only if it admits an isometric representation on a reflexive Banach space.

 Moreover,   group algebras are also always WAP-algebras, however;  they are neither dual Banach algebras, nor Arens
regular in the case where the underlying group is not discrete, see \cite{y1}. Ample information about WAP-algebras with further details can be found in the impressive paper \cite{Da2}.

 A character on an ablian algebra $\A$ is a non-zero homomorphism $\tau:\A\rightarrow \mathbb{C}.$ The set of all characters on $\A$ endowed with relative weak$^*$- topology is called character space of $\A$.

Following \cite{BJM}, a semi-topological semigroup is a semigroup $S$  equipped with a Hausdorff  topology under which the multiplication of $S$ is separately continuous. If the multiplication of $S$ is jointly continuous, then $S$ is said to be a topological semigroup. We write $\ell^{\infty}(S)$  for the commutative $C^*$-algebra of all bounded complex-valued   functions on $S$. In the case where $S$ is  locally compact  we also write $C(S)$ and $C_0(S)$ for the $C^*-$subalgebras of $\ell_\infty(S)$ consist of continuous elements and continuous elements  which vanish  at infinity, respectively. We also denote the space of all {\it weakly almost periodic} functions on $S$ by $wap(S)$ which is defined by $$wap(S)=\{f\in C(S): \{R_sf: s\in S\}\  {\rm is\ relatively\ weakly\ compact}\},$$ where $R_sf(t)=f(ts), \ (s,t\in S).$  Then $wap(S)$ is a $C^*-$subalgebra of $C(S)$ and  its character space $S^{wap}$, endowed with the Gelfand topology, enjoys a (Arens type) multiplication that turns it into a  compact  semi-topological semigroup. The evaluation mapping $\epsilon: S\rightarrow S^{wap}$ is  a homomorphism with dense  image and  it induces  an isometric $*-$isomorphism from $C(S^{wap})$ onto $wap(S).$  Many other  properties of $wap(S)$ and its inclusion relations among other function algebras are completely explored in \cite{BJM}.

Let $M_b(S)$ be the Banach space  of all complex regular Borel measures on $S,$ which is known as a Banach algebra with the total variation norm and under the convolution product $*$ defined by the equation
$$\langle \mu*\nu,g\rangle =\int_S\int_Sg(xy)d\mu(x)d\nu(y)\quad (g\in C_0(S))$$
and  as dual of $C_0(S)$.

 Throughout, a weight on $S$ is a Borel measurable   function $\omega:S\rightarrow (0,\infty)$ such that
$$\omega(st)\leq \omega(s)\omega(t),\ \ (s,t\in S).$$
For  $\mu\in M_b(S)$ we define $(\mu\omega)(E)=\int_E\omega d\mu, \ (E\subseteq S\  {\rm is\ Borel\ set}).$ If $\omega\geq 1$, then
 $$M_b(S,\omega)=\{\mu\in M_b(S): \mu\omega\in M_b(S)\}$$
is known as a Banach algebra which is called the {\it weighted semigroup measure algebra} (see \cite{DL,Re1,Re,RV} for further details about such algebras and arbitrary weight functions). Let $S$ be a locally compact semigroup, and let $B(S)$ denote the space of all Borel  measurable and bounded functions on $S$. Set $B(S,1/\omega)=\{f:S\rightarrow \mathbb{C}:f/\omega\in B(S)\}$.  A standard  predual for $M_b(S,\omega)$ is
$$C_0(S,1/\omega)=\{f\in B(S,1/\omega): f/\omega\in C_0(S)\}.$$ Let $f \in C(S,1/\omega)$ then $f$ is called $\omega$-weakly almost  periodic if the set $\{\frac{R_sf}{\omega(s)\omega}: s \in S\}$ is   relatively weakly compact in $C(S)$, where $R_s$  is defined as above. The set of all $\omega$-weakly almost  periodic functions on $S$ is denoted by $wap(S, 1/\omega)$.

In the case where $S$ is discrete we write  $\ell_1(S,\omega)$  instead of  $M_b(S,\omega)$ and $c_0(S,\frac{1}{\omega})$ instead of $C_0(S,\frac{1}{\omega}).$ Then the space
$$\ell_1(S,\omega)=\{f:f=\sum_{s\in S}f(s)\delta_s,\quad ||f||_{1,\omega}=\sum_{s\in S}|f(s)|\omega(s)<\infty\}$$

(where, $\delta_s\in\ell_1(S,\omega) $ be the point mass at $s$ which can be thought as the  vector basis element of $\ell_1(S,\omega)$ ) equipped with the multiplication
$$f*g=\sum_{r\in S}\sum_{st=r}f(s)g(t)\delta_r$$
 (and also define $f*g=0$ if for each $r\in S$ the equation $st=r$ has no solution;) is a Banach algebra which will be called weighted semigroup algebra.
 We also suppress $1$ from the notation whenever $w=1.$

\section{  Semigroup Measure Algebras as Dual Banach Algebras }

 It is  known that the semigroup algebra $\ell_1(S)$ is a dual Banach algebra with respect to $c_0(S)$ if and only if $S$ is weakly cancellative semigroup, see \cite[Theorem4.6]{DLS}. Throughout this section $\omega$ is a continuous weight on $S$. This result has been extended for  the  weighted semigroup algebras $\ell_1(S,\omega)$; \cite{ARV, Da1}. In this section we extend this results to the non-discrete case. We provide some necessary and sufficient conditions that  the measure algebra $M_b(S, \omega)$ becomes  a dual Banach algebra with respect to the  predual $C_0(S,1/\omega)$.

Let $F$ and $K$ be nonempty subsets of a semigroup $S$ and $s\in S$. We put $$s^{-1}F=\{t\in S:st\in F\}\mbox{, and}\quad Fs^{-1}=\{t\in S:ts\in F\}$$
and we also write $s^{-1}t $ for the set $s^{-1}\{t\}$, $FK^{-1}$
for $\cup\{Fs^{-1}:s\in K\}$ and $K^{-1}F$ for
$\cup\{s^{-1}F:s\in K\}$.

{\rm A semigroup $S$ is called left (respectively, right) zero semigroup if $xy=x$
(respectively,  $xy=y$), for all $x,y\in S$.  A semigroup $S$ is called  zero semigroup if there exist  $z\in S$  such that $xy=z$
 for all $x,y\in S$.
 A semigroup $S$ is said to be {\it left (respectively, right) weakly cancellative semigroup} if $s^{-1}F$ (respectively, $Fs^{-1}$) is finite for
each $s\in S$ and each finite subset $F$ of $S$.
 A semigroup $S$ is said to be {\it weakly cancellative semigroup} if it is both left and right
weakly cancellative semigroup.

A semi-topological semigroup $S$ is said to be {\it compactly
cancellative semigroup} if for every compact subsets $F$ and $K$
of $S$ the sets $F^{-1}K $ and $KF^{-1}$ are  compact set.
}
\begin{lemma}\label{ert}{\rm Let $S$ be a  topological semigroup. For every compact subsets $F$ and $K$ of $S$ the sets $F^{-1}K $ and $KF^{-1}$ are closed.
 }\end{lemma}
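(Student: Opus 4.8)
The plan is to prove both sets are closed by a net argument, exploiting the joint continuity of multiplication that the hypothesis ``topological semigroup'' supplies. Recall that by definition $F^{-1}K=\{t\in S:\ \exists\, s\in F,\ st\in K\}$ and $KF^{-1}=\{t\in S:\ \exists\, s\in F,\ ts\in K\}$. I treat $F^{-1}K$ first; the argument for $KF^{-1}$ is entirely symmetric.

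First I would fix a net $(t_\alpha)$ in $F^{-1}K$ with $t_\alpha\to t$ in $S$, and aim to show $t\in F^{-1}K$. For each index $\alpha$ choose $s_\alpha\in F$ with $s_\alpha t_\alpha\in K$. Since $F$ is compact, the net $(s_\alpha)$ admits a convergent subnet $(s_{\alpha_\beta})$ with limit $s\in F$; passing to the same subnet, we still have $t_{\alpha_\beta}\to t$. This is the step where compactness of $F$ is essential: it is exactly what lets me extract a limit point $s$ for the ``witness'' elements, which is needed to produce a witness for $t$.

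Next I would pass to the limit in the product. Because $S$ is a topological semigroup, multiplication is jointly continuous, so $s_{\alpha_\beta}t_{\alpha_\beta}\to st$. Since $S$ is Hausdorff, the compact set $K$ is closed, and each $s_{\alpha_\beta}t_{\alpha_\beta}$ lies in $K$; hence $st\in K$. With $s\in F$ this yields $t\in s^{-1}K\subseteq F^{-1}K$, so $F^{-1}K$ is closed. Replacing $st$ by $ts$ throughout (and using joint continuity once more) gives the closedness of $KF^{-1}$.

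The main obstacle, and the reason the statement is phrased for topological rather than merely semi-topological semigroups, is precisely the step $s_{\alpha_\beta}t_{\alpha_\beta}\to st$: here both factors vary simultaneously, so separate continuity of the multiplication would not suffice, and joint continuity is genuinely required. Everything else is routine: compactness of $F$ supplies the convergent subnet, and Hausdorffness makes $K$ closed so that the limit of products stays in $K$.
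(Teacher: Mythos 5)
Your proof is correct and follows essentially the same route as the paper's: a net argument in which compactness of $F$ yields a convergent subnet of the witnesses, joint continuity gives convergence of the products, and compactness of $K$ (via closedness in the Hausdorff topology, where the paper instead extracts a convergent subnet of the products in $K$) forces the limit product to lie in $K$. The only cosmetic difference is that the paper also notes the trivial empty-set case explicitly; your identification of joint continuity as the step where ``topological'' rather than ``semi-topological'' is needed matches the paper's (implicit) use of it.
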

 \begin{proof}If ${F^{-1}K }$ is empty, then it is closed.  Let $x$ be in the closure of  ${F^{-1}K }$. Then there is a net $(x_\alpha)$ in $F^{-1}K $ such that $x_\alpha \rightarrow x$. Since $x_\alpha\in F^{-1}K $  there is a net $(f_\alpha)$ in $F$ such that $f_\alpha x_\alpha\in K$. Using the  compactness of $F$ and $K$, by passing to a subnet, if necessary,  we may suppose that $f_\alpha x_\alpha\rightarrow k$ and $f_\alpha \rightarrow f$, for some $f\in F$ and $k\in K$. So $fx=k\in K$, that is $x\in {F^{-1}K }$. Therefore $F^{-1}K $ is closed. A similar argument shows that $KF^{-1}$ is also  closed.
\end{proof}
In the next result we study $M_b(S,\omega)$ from the dual Banach algebra point of view.

\begin{theorem}\label{we}{\rm
Let $S$ be  a locally compact topological semigroup and $\omega$
be a continuous weight on $S$. Then the measure algebra $M_b(S,\omega)$ is a
dual Banach algebra with respect to the predual $C_0(S, 1/{\omega})$ if
and only if
for all compact subsets $F$ and $K$ of $S$ , the maps $\frac{{\displaystyle \chi}_{F^{-1}K}}{\omega}$ and $\frac{{\displaystyle \chi}_{KF^{-1}}}{\omega}$ vanishes at infinity.
}\end{theorem}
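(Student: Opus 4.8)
The plan is to recast the statement in module-theoretic terms and then compute the two module actions explicitly as integral operators. By the definition recalled in the excerpt, $M_b(S,\omega)$ is a dual Banach algebra with predual $C_0(S,1/\omega)$ precisely when $C_0(S,1/\omega)$ is a closed $M_b(S,\omega)$-submodule of $M_b(S,\omega)^*$, i.e. when $f\cdot\mu\in C_0(S,1/\omega)$ and $\mu\cdot f\in C_0(S,1/\omega)$ for all $f\in C_0(S,1/\omega)$ and $\mu\in M_b(S,\omega)$. Unwinding $\langle f\cdot\mu,\nu\rangle=\langle f,\mu*\nu\rangle$ against the definition of convolution gives $(f\cdot\mu)(y)=\int_S f(xy)\,d\mu(x)$ and, symmetrically, $(\mu\cdot f)(x)=\int_S f(xy)\,d\mu(y)$. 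Continuity of these functions is automatic: $f(xy)$ is jointly continuous and, on a compact neighbourhood of a given $y$, it is dominated in $x$ by $\|f/\omega\|_\infty\,\omega(y)\,\omega(x)$, which is $|\mu|$-integrable; so the only real content is the \emph{weighted} vanishing at infinity, namely that $(f\cdot\mu)/\omega$ and $(\mu\cdot f)/\omega$ lie in $C_0(S)$.

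For sufficiency I would first reduce, using the density of compactly supported objects together with the boundedness of the module maps and the closedness of the predual, to the case where $f/\omega\in C_c(S)$ with $\operatorname{supp}(f/\omega)\subseteq K$ and $\mu$ is supported in a compact set $F$. In that case $(f\cdot\mu)(y)\ne 0$ forces $xy\in K$ for some $x\in F$, i.e. $y\in F^{-1}K$, which is closed by Lemma~\ref{ert}. On this set $|f(xy)|=|(f/\omega)(xy)|\,\omega(xy)\le\|f/\omega\|_\infty\,(\sup_K\omega)$, and hence one obtains the pointwise domination $\bigl|(f\cdot\mu)(y)\bigr|/\omega(y)\le C\,\chi_{F^{-1}K}(y)/\omega(y)$ with $C=\|f/\omega\|_\infty(\sup_K\omega)\|\mu\|$. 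Since $\chi_{F^{-1}K}/\omega\in C_0(S)$ by hypothesis, $(f\cdot\mu)/\omega$ is dominated by a $C_0(S)$-function, so $f\cdot\mu\in C_0(S,1/\omega)$; the identical argument with $\chi_{KF^{-1}}$ handles $\mu\cdot f$. This gives the submodule property and hence the dual Banach algebra structure.

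For necessity I would work by contraposition. The easy half is the one-point version: taking $f$ with $0\le f/\omega\le 1$, $f/\omega=1$ on $K$ and $f/\omega\in C_c(S)$, one computes $(f\cdot\delta_s)(y)=f(sy)$, so that $(f\cdot\delta_s)/\omega\ge \chi_{s^{-1}K}/\omega\ge 0$; since $f\cdot\delta_s\in C_0(S,1/\omega)$, each $\chi_{s^{-1}K}/\omega$ vanishes at infinity. The genuine difficulty is to pass from single points to an arbitrary compact $F$, because $F^{-1}K=\bigcup_{x\in F}x^{-1}K$ is an uncountable union and vanishing at infinity is not preserved under such unions (in the discrete case $F$ is finite, which is exactly why point masses suffice there). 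I would therefore argue by contradiction: if $\chi_{F^{-1}K}/\omega\notin C_0(S)$ there are $\epsilon>0$ and a net $y_\alpha\to\infty$ with $\omega(y_\alpha)\le 1/\epsilon$ and points $x_\alpha\in F$ with $x_\alpha y_\alpha\in K$; passing to subnets, $x_\alpha\to x_0\in F$ and $x_\alpha y_\alpha\to k_0\in K$. Because $\omega(y_\alpha)$ is bounded and $f/\omega\in C_0(S)$, one has $\delta_{y_\alpha}\to 0$ weak$^*$ in $M_b(S,\omega)=C_0(S,1/\omega)^*$, while $\delta_{x_\alpha}\to\delta_{x_0}$ weak$^*$, yet $\delta_{x_\alpha}*\delta_{y_\alpha}=\delta_{x_\alpha y_\alpha}\to\delta_{k_0}\neq 0$; the aim is to turn this into a contradiction with the separate weak$^*$-continuity of convolution guaranteed by the dual Banach algebra hypothesis.

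The hard part will be precisely this last step: separate weak$^*$-continuity does not by itself control the pairing of the two weak$^*$-convergent nets $\delta_{x_\alpha}$ and $\delta_{y_\alpha}$, equivalently, the left translation $x\mapsto f\cdot\delta_x$ need not be norm-continuous into $C_0(S,1/\omega)$ when $S$ fails to be cancellative, so one cannot simply interchange the two limits. To close the gap I expect to exploit the \emph{joint} continuity of the multiplication of the topological semigroup $S$ together with local compactness: choosing $f$ with $f/\omega=1$ on a compact neighbourhood of $k_0$ and building a single nonnegative measure $\mu$ supported on $F$ (with $x_0$ in its support) so that the open sets $\{x:xy_\alpha\in\operatorname{int}K'\}$ carry definite $\mu$-mass, one forces $(f\cdot\mu)(y_\alpha)/\omega(y_\alpha)$ to stay bounded away from $0$ along $y_\alpha\to\infty$, contradicting $f\cdot\mu\in C_0(S,1/\omega)$. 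Here the continuity of $\omega$ (to bound $\omega$ on the relevant compacta) and the closedness of $F^{-1}K$ from Lemma~\ref{ert} are the technical ingredients that make the localisation succeed.
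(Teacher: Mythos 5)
Your sufficiency argument is correct and, in substance, a repackaging of the paper's: where the paper makes a direct $\varepsilon$-estimate, splitting $\int_S$ into $\int_F$ and $\int_{S\setminus F}$ and using compactness of $\{t\in F^{-1}K:\omega(t)\leq 1/\varepsilon\}$, you reduce by density to $f/\omega\in C_c(S)$ and compactly supported $\mu$ and then dominate $|f\cdot\mu|/\omega$ by a multiple of $\chi_{F^{-1}K}/\omega$; both rest on the hypothesis in the same way, and your version is if anything cleaner.

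The genuine gap is in the necessity direction, and it sits exactly where you say the ``hard part'' is: the passage from the weak$^*$ data ($\delta_{x_\alpha}\to\delta_{x_0}$, $\delta_{y_\alpha}\to 0$, $\delta_{x_\alpha y_\alpha}\to\delta_{k_0}$) to a contradiction is never carried out. Your closing paragraph is a plan, not a proof, and the plan cannot work as stated. Indeed, your own one-point argument, applied to an arbitrary fixed $x\in S$ and a function $h$ with $h/\omega=1$ on $K'$, shows that under the dual Banach algebra hypothesis one has $h(xy_\alpha)/\omega(y_\alpha)\to 0$; since $\omega\geq 1$ and $\omega(y_\alpha)\leq 1/\epsilon$, this forces $xy_\alpha\notin K'$ eventually. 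So the open sets $U_\alpha=\{x:xy_\alpha\in\operatorname{int}K'\}$ eventually omit \emph{every} fixed point of $S$ --- in particular $x_0$, so placing $x_0$ in the support of $\mu$ buys nothing. Consequently, whenever the bad net can be taken to be a sequence $(y_n)$ (e.g.\ $S$ metrizable), dominated convergence gives $|\mu|(U_n)\to 0$ for every finite measure $\mu$, hence $(f\cdot\mu)(y_n)/\omega(y_n)\to 0$ for every fixed pair $(f,\mu)$: the quantity you intend to keep bounded away from zero tends to zero automatically. Joint continuity cannot rescue this, since it is only usable at cluster points of $(x_\alpha,y_\alpha)$ in $S\times S$, and there are none because $y_\alpha\to\infty$.

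What is missing is precisely the paper's key device, for which your proposal contains no substitute: the paper applies the submodule hypothesis to the \emph{single} limit point mass $\delta_{t_0}$ (where $t_\gamma\to t_0$ in $F$) and then traps the whole tail of the bad net inside one compact set, via the claimed inclusion $\bigcup_{\gamma\geq\gamma_0}t_\gamma^{-1}K\subseteq\{s:(\delta_{t_0}.f/\omega)(s)\geq \tfrac12\}$, the right-hand set being compact because $\delta_{t_0}.f\in C_0(S,1/\omega)$. (That inclusion itself requires uniform, not pointwise, convergence of the translates $\delta_{t_\gamma}.f$ to $\delta_{t_0}.f$ --- that is, exactly the norm-continuity of translation that you correctly observe is unavailable in general; so your diagnosis of the crux is accurate, but identifying the obstruction is not the same as overcoming it, and your proposed measure-theoretic construction demonstrably cannot do so.)
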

\begin{proof}Suppose that $M_b(S,\omega)$ is  a dual Banach
algebra with respect to $C_0(S, 1/\omega)$ and let  $K, F$ be nonempty compact subsets of $S$ with a net $(x_\alpha)$ in $F^{-1}K $. Let $C^+_{00}(S)$ denote the non-negative continuous functions with compact support on $S$  and set $C_{00}^+(S,\frac{1}{\omega})=\{f\in C_{0}(S,\frac{1}{\omega}):f/\omega\in C_{00}^+(S)\}$.  Since $\omega$ is continuous we may choose  $f\in C_{00}^+(S,\frac{1}{\omega})$ with $f(K)=1$. There is a net $(t_\alpha)\in F$ such that $t_\alpha x_\alpha\in K$ and the
 compactness of $F$ guaranties the existence of a subnet $(t_\gamma)$ of $(t_\alpha)$ such that $t_\gamma\rightarrow t_0$ for some $t_0$ in $S$.
   Indeed,  for $s\in S$,  $$\lim_{\gamma}(\frac{\delta_{t_\gamma}.f}{\omega})(s)=\lim_{\gamma}\frac{f(t_\gamma s)}{\omega(s)}=
  \frac{f(t_0s)}{\omega(s)}=\frac{\delta_{t_0}.f}{\omega}(s)$$
  there is a $\gamma_0$ such that
$$\bigcup_{\gamma\geq\gamma_0}t_\gamma^{-1}K\subseteq\bigcup_{\gamma\geq\gamma_0}\{s\in
S:(\frac{\delta_{t_\gamma}.f}{\omega})(s)\geq1\}\subseteq\{s\in
S:(\frac{\delta_{t_0}.f}{\omega})(s)\geq\frac{1}{2}\}.$$
 Let $H=\{t_\gamma: \gamma\geq\gamma_0\}\cup\{t_0\}$. Then
$$H^{-1}K =\bigcup_{\gamma\geq\gamma_0} t_\gamma^{-1}K\cup t_0^{-1}K\subseteq
\{s\in S:(\frac{\delta_{t_0}.f}{\omega})(s)\geq\frac{1}{2}\}$$
 and so $H^{-1}K $ is compact. Furthermore, $t_\gamma x_\gamma\in K$, that is
$(x_\gamma)$ is a net in compact set $H^{-1}K$. This means that
$(x_\alpha)$ has a convergent subnet and this  is the proof of necessity.

   The sufficiency  can be  adopted from  \cite[Proposition 3.1]{ARV} with some modifications.  Let $f\in C_0(S, 1/\omega)$ , $\mu\in M_b(S,\omega)
$ and $\varepsilon >0$ be arbitrary. There exist compact
subsets $F$ and $K$ of S such that
 $|\frac{f}{\omega}(s)|<\varepsilon$ for all $s\not\in K$ and $|(\mu\omega)|(S\setminus F)<\varepsilon$.
 \par
Let $s\not\in \{t\in F^{-1}K:\omega(t)\leq \frac{1}{\varepsilon}\}$ , which is compact by hypothesis. Then
\begin{eqnarray*}
|\frac{\mu.f}{\omega}(s)|&=&|\int_S\frac{f(ts)}{\omega(s)}d\mu(t)|\\
&\leq&|\int_F\frac{f(ts)}{\omega(s)}d\mu(t)|+|\int_{S\setminus F}\frac{f(ts)}{\omega(s)}d\mu(t)|\\
&\leq&\int_F|\frac{f(ts)}{\omega(ts)}|\omega(t)d|\mu|(t)+\int_{S\setminus F}|\frac{f(ts)}{\omega(ts)}|\omega(t)d|\mu|(t)\\
&\leq&\varepsilon\int_S\omega(t)d|\mu|(t)+\|f\|_{\omega,\infty}\int_{S\setminus F}\omega(t)d|\mu|(t)\\
&\leq&\varepsilon\|\mu\|_{\omega}+\varepsilon\|f\|_{\omega,\infty}
\end{eqnarray*}
That is, $\mu.f\in C_0(S, 1/\omega)$. Therefore $M_b(S,\omega)$ is a
dual Banach algebra with respect to $C_0(S, 1/\omega)$.\end{proof}
The next Corollaries are immediate consequences of Theorem \ref{we}.
\begin{corollary}\label{w}{\rm Let $S$ be a locally compact topological semigroup. Then the measure algebra $M_b(S)$ is a dual Banach algebra with respect to  $C_0(S)$ if and only if $S$ is a compactly cancellative semigroup.
}\end{corollary}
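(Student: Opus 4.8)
The plan is to deduce this as a direct specialization of Theorem \ref{we} to the trivial weight $\omega \equiv 1$. First I would record that when $\omega \equiv 1$ one has $M_b(S,\omega) = M_b(S)$ and $C_0(S,1/\omega) = C_0(S)$, while each function appearing in the criterion of Theorem \ref{we} collapses to a plain characteristic function, since $\chi_{F^{-1}K}/\omega = \chi_{F^{-1}K}$ and $\chi_{KF^{-1}}/\omega = \chi_{KF^{-1}}$. Thus Theorem \ref{we} already tells us that $M_b(S)$ is a dual Banach algebra with respect to $C_0(S)$ if and only if, for all compact subsets $F$ and $K$ of $S$, the characteristic functions $\chi_{F^{-1}K}$ and $\chi_{KF^{-1}}$ vanish at infinity. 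It therefore only remains to recast this vanishing condition as the statement that $S$ is compactly cancellative.

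The key step is the elementary observation that, for any subset $A \subseteq S$, the function $\chi_A$ vanishes at infinity if and only if $A$ is compact. Indeed, for every $\varepsilon$ with $0 < \varepsilon \le 1$ the super-level set $\{s \in S : |\chi_A(s)| \ge \varepsilon\}$ coincides with $A$ itself (and is empty for $\varepsilon > 1$), so demanding that all these level sets be compact is exactly demanding that $A$ be compact. Applying this with $A = F^{-1}K$ and with $A = KF^{-1}$, the criterion of the previous paragraph becomes: $F^{-1}K$ and $KF^{-1}$ are compact for all compact $F, K \subseteq S$. By definition this is precisely the assertion that $S$ is a compactly cancellative semigroup, which settles both implications simultaneously.

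I do not anticipate any genuine obstacle, as the statement is a specialization of Theorem \ref{we}; the only point deserving care is the interpretation of ``vanishes at infinity'' for the \emph{a priori} non-continuous characteristic functions, namely the level-set formulation used above. Here Lemma \ref{ert} enters naturally and reassuringly: it guarantees that $F^{-1}K$ and $KF^{-1}$ are already closed, so that the vanishing condition cannot fail for the trivial reason of non-closedness, and compactness is exactly its content. With this in hand the translation is purely formal, and the corollary follows.
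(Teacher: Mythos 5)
Your proposal is correct and follows exactly the route the paper intends: the paper states this corollary as an immediate consequence of Theorem \ref{we}, and your argument is precisely that specialization to $\omega\equiv 1$, with the translation of ``$\chi_{F^{-1}K}$ and $\chi_{KF^{-1}}$ vanish at infinity'' into compactness of $F^{-1}K$ and $KF^{-1}$ spelled out. Your use of Lemma \ref{ert} to rule out the relatively-compact-but-not-closed ambiguity is exactly the role that lemma plays in the paper.
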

\begin{corollary}\label{ws}{\cite[Theorem 2.2]{ARV}}{\rm For a semigroup $S$  the semigroup  algebra $\ell_1(S,\omega)$ is a dual Banach algebra with respect to the predual $c_0(S, 1/\omega)$ if and only if for all $s,t\in S$, the maps $\frac{{\displaystyle \chi}_{t^{-1}s}}{\omega}$ and $\frac{{\displaystyle \chi}_{st^{-1}}}{\omega}$ are in $c_0(S)$.
}\end{corollary}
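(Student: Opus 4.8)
The plan is to derive this directly from Theorem \ref{we} by specializing to the discrete topology on $S$. First I would observe that when $S$ is equipped with the discrete topology it becomes a locally compact topological semigroup (multiplication on a discrete semigroup is automatically jointly continuous, and every discrete space is locally compact), and that every weight $\omega\colon S\to(0,\infty)$ is then continuous. Moreover, in this setting the algebras coincide with their discrete analogues, $M_b(S,\omega)=\ell_1(S,\omega)$ and $C_0(S,1/\omega)=c_0(S,1/\omega)$, exactly as the notational conventions of the introduction prescribe. Thus Theorem \ref{we} applies verbatim, and the dual Banach algebra property of $\ell_1(S,\omega)$ with predual $c_0(S,1/\omega)$ is equivalent to the requirement that $\chi_{F^{-1}K}/\omega$ and $\chi_{KF^{-1}}/\omega$ vanish at infinity for all compact subsets $F,K\subseteq S$.

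Next I would translate the two topological notions in Theorem \ref{we} into their discrete counterparts. In a discrete space the compact subsets are precisely the finite subsets, and a function $g$ vanishes at infinity exactly when $\{s:|g(s)|\geq\varepsilon\}$ is finite for every $\varepsilon>0$, that is, exactly when $g\in c_0(S)$. Hence the criterion of Theorem \ref{we} reads: $\chi_{F^{-1}K}/\omega\in c_0(S)$ and $\chi_{KF^{-1}}/\omega\in c_0(S)$ for all \emph{finite} $F,K\subseteq S$.

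It then remains to reduce from arbitrary finite sets $F,K$ to singletons $\{t\},\{s\}$, i.e. to match the condition on $\chi_{t^{-1}s}/\omega$ and $\chi_{st^{-1}}/\omega$ stated in the corollary. One direction is trivial, since singletons are finite. For the converse I would use the finite decomposition $F^{-1}K=\bigcup_{t\in F,\,s\in K}t^{-1}s$, which yields the pointwise bound
\[
\frac{\chi_{F^{-1}K}}{\omega}\leq \sum_{t\in F,\,s\in K}\frac{\chi_{t^{-1}s}}{\omega}.
\]
The right-hand side is a finite sum of functions lying in $c_0(S)$ and hence belongs to $c_0(S)$; since a nonnegative function dominated by a $c_0$-function is again in $c_0(S)$, we conclude $\chi_{F^{-1}K}/\omega\in c_0(S)$. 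The same argument applied to $KF^{-1}=\bigcup_{s\in K,\,t\in F}st^{-1}$ handles $\chi_{KF^{-1}}/\omega$, and this closes the equivalence.

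I do not anticipate a genuine obstacle, as the corollary is essentially a specialization combined with closure of $c_0(S)$ under finite sums and domination. The only point demanding care is verifying that the discrete topology truly meets every hypothesis of Theorem \ref{we} — local compactness, joint continuity of the multiplication, and continuity of $\omega$ — so that the theorem may be invoked without any modification.
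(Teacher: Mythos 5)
Your proof is correct and follows the same route as the paper, which simply declares this corollary an immediate consequence of Theorem \ref{we}: you specialize to the discrete topology (where compact means finite, vanishing at infinity means lying in $c_0(S)$, and $M_b(S,\omega)=\ell_1(S,\omega)$ with predual $c_0(S,1/\omega)$) and then reduce from finite $F,K$ to singletons via $\chi_{F^{-1}K}\leq\sum_{t\in F,\,s\in K}\chi_{t^{-1}s}$. Your explicit finite-to-singleton domination argument is exactly the detail the paper leaves to the reader.
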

\begin{corollary}\label{yuu}{\rm For a locally compact topological semigroup $S$, if  $M_b(S)$ is a dual Banach algebra with respect to  $C_0(S)$  then $M_b(S,\omega)$ is a dual Banach algebra with respect to  $C_0(S, 1/\omega)$.
}\end{corollary}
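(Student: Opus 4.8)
The plan is to combine the two characterizations already in hand—Corollary \ref{w} for the unweighted case and Theorem \ref{we} for the weighted case—and to exploit the elementary fact that a function supported on a compact set automatically vanishes at infinity. First I would translate the hypothesis: by Corollary \ref{w}, the assertion that $M_b(S)$ is a dual Banach algebra with respect to $C_0(S)$ is \emph{exactly} the statement that $S$ is compactly cancellative, i.e. that $F^{-1}K$ and $KF^{-1}$ are compact for every pair of compact subsets $F,K\subseteq S$. This reduction converts the analytic hypothesis into a purely combinatorial/topological one about the sets $F^{-1}K$ and $KF^{-1}$.

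Next I would fix arbitrary compact sets $F,K\subseteq S$ and examine $\frac{\chi_{F^{-1}K}}{\omega}$. By the first step $F^{-1}K$ is compact, so this function vanishes outside a compact set. Since $\omega$ is continuous and strictly positive, $1/\omega$ is bounded on the compact set $F^{-1}K$, and hence $\frac{\chi_{F^{-1}K}}{\omega}$ is a bounded Borel function with compact support. Consequently, for each $\varepsilon>0$ the set $\{s\in S:|\frac{\chi_{F^{-1}K}(s)}{\omega(s)}|\geq\varepsilon\}$ is contained in $F^{-1}K$ and therefore has compact closure, which is precisely the meaning of vanishing at infinity. The identical argument applied to $KF^{-1}$ shows that $\frac{\chi_{KF^{-1}}}{\omega}$ also vanishes at infinity.

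Finally, as $F$ and $K$ were arbitrary compact subsets, the two conditions required by Theorem \ref{we} are satisfied, and that theorem yields the desired conclusion that $M_b(S,\omega)$ is a dual Banach algebra with respect to the predual $C_0(S,1/\omega)$. I do not expect a genuine obstacle in this argument; it is, as the paper indicates, an immediate consequence of Theorem \ref{we} once one passes through Corollary \ref{w}. The only point deserving a word of care is the observation that compactly supported functions vanish at infinity, which tacitly uses the positivity and continuity of $\omega$ to guarantee that $1/\omega$ stays bounded on $F^{-1}K$ and $KF^{-1}$; without these properties the quotient $\chi_{F^{-1}K}/\omega$ need not even be bounded, so this is where the standing assumptions on $\omega$ are quietly but essentially invoked.
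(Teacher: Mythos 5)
Your argument is correct and follows exactly the route the paper intends: the paper offers no separate proof, labelling this an immediate consequence of Theorem \ref{we}, and your chain (hypothesis $\Rightarrow$ compact cancellativity via Corollary \ref{w} $\Rightarrow$ $\chi_{F^{-1}K}/\omega$ and $\chi_{KF^{-1}}/\omega$ are supported on compact sets, hence vanish at infinity $\Rightarrow$ conclusion via Theorem \ref{we}) is precisely that intended deduction. One tiny remark: vanishing at infinity already follows from compact support alone, since $\{s:|\chi_{F^{-1}K}(s)/\omega(s)|\geq\varepsilon\}\subseteq F^{-1}K$ for every $\varepsilon>0$, so the boundedness of $1/\omega$ on compacta, while true by continuity and positivity of $\omega$, is not actually essential to the step.
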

\begin{corollary} {\rm Let $S$ be either a left zero (right zero) or a zero locally compact semigroup. There  is a weight $\omega$ such that $M_b(S,\omega)$ is a dual Banach algebra with respect to  $C_0(S,\frac{1}{ \omega})$ if and only if $S$ is $\sigma$-compact.
}\end {corollary}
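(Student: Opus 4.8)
The plan is to collapse the two-sided condition of Theorem~\ref{we} into a single properness requirement on the weight and then identify that requirement with $\sigma$-compactness. First I would compute $F^{-1}K$ and $KF^{-1}$ for arbitrary compact $F,K$ in each of the three cases. In a left zero semigroup $st=s$, so $s^{-1}K=S$ when $s\in K$ and $\emptyset$ otherwise, whence $F^{-1}K=S$ as soon as $F\cap K\neq\emptyset$ and $\emptyset$ otherwise; dually $Ks^{-1}=K$ for every $s$, so $KF^{-1}=K$ is compact. The right zero case is the mirror image ($F^{-1}K=K$ compact, $KF^{-1}\in\{S,\emptyset\}$), and in a zero semigroup with $st\equiv z$ one finds $F^{-1}K=KF^{-1}=S$ exactly when $z\in K$ (e.g.\ $F=K=\{z\}$) and $\emptyset$ otherwise. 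In every case one of the two sets is compact, so the corresponding $\chi/\omega$ has compact support and vanishes at infinity automatically, while the other set is forced to equal $S$ for a suitable choice of $F,K$. Hence, by Theorem~\ref{we}, $M_b(S,\omega)$ is a dual Banach algebra with respect to $C_0(S,1/\omega)$ if and only if $\chi_S/\omega=1/\omega$ vanishes at infinity, that is, $1/\omega\in C_0(S)$.

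Next I would observe that submultiplicativity costs nothing in these examples. For a left zero semigroup $\omega(st)=\omega(s)\le\omega(s)\omega(t)$ just says $\omega\ge 1$, and similarly for a right zero semigroup; for a zero semigroup $\omega(st)=\omega(z)\le\omega(s)\omega(t)$ holds automatically once $\omega\ge 1$ and $\omega(z)=1$. Thus the corollary is reduced to the purely topological assertion that on the locally compact Hausdorff space $S$ there exists a continuous $\omega\ge 1$ with $1/\omega\in C_0(S)$ (and, in the zero case, with $\omega(z)=1$) if and only if $S$ is $\sigma$-compact.

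Finally I would prove this equivalence. For necessity, if $1/\omega\in C_0(S)$ then each sublevel set $\{s:\omega(s)\le n\}=\{s:(1/\omega)(s)\ge 1/n\}$ is compact, and since $\omega$ is everywhere finite these sets exhaust $S$, so $S$ is $\sigma$-compact. For sufficiency, write $S=\bigcup_n K_n$ with each $K_n$ compact; using local compactness and Urysohn's lemma I would choose $g_n\in C^+_{00}(S)$ with $0\le g_n\le 2^{-n}$ and $g_n=2^{-n}$ on $K_n$, and set $g=\sum_n g_n$. The series converges uniformly, so $g\in C_0(S)$, and $g>0$ everywhere since each point lies in some $K_n$; then $\omega:=1/g$ is continuous, $\omega\ge 1$, and $1/\omega=g\in C_0(S)$. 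For the zero semigroup I would additionally start the exhaustion with $z\in K_1$, which yields $g(z)=\sum_n 2^{-n}=1$ and hence $\omega(z)=1$. The only point demanding care is this last one, namely reconciling the properness of $\omega$ with the weight inequality at the distinguished idempotent $z$; but it is resolved simply by normalizing the exhaustion so that $z$ is covered from the first stage.
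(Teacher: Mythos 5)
Your argument is correct, and its first half (the reduction via Theorem~\ref{we}, and extracting $\sigma$-compactness from compactness of the sublevel sets $\{s\in S:\omega(s)\le n\}$) is essentially the paper's. The converse, however, is genuinely different, and yours is the sounder route. The paper writes $S=\bigcup_{n}S_n$ as a \emph{disjoint} union of compact sets and puts $\omega=1+n$ on $S_n$; this has two defects. First, such a disjoint decomposition need not exist: a connected, non-compact, $\sigma$-compact space such as $S=[0,\infty)$ (the paper's own zero semigroup in Examples~\ref{Ex}(2)) cannot be partitioned into countably many pairwise disjoint nonempty compact sets --- finitely many pieces is impossible by connectedness together with unboundedness, and infinitely many is ruled out by Sierpi\'nski's theorem applied to a closed interval meeting two of the pieces. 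Second, even when such a decomposition exists, the resulting $\omega$ is discontinuous, whereas Theorem~\ref{we}, which the corollary must invoke, is proved under the section's standing hypothesis that $\omega$ is continuous. Your construction $\omega=1/g$, with $g=\sum_n g_n$ a sum of Urysohn functions subordinate to an exhaustion, produces a continuous weight on every $\sigma$-compact locally compact $S$, so it repairs both defects at once. Your case-by-case computation is also more precise than the paper's blanket claim that $F^{-1}K$ and $KF^{-1}$ are always $\emptyset$ or $S$: in the left (right) zero case one of them equals the compact set $K$, which, as you observe, is harmless since its characteristic function divided by $\omega$ vanishes at infinity automatically.

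One small repair in your zero-semigroup normalization: $g(z)=\sum_n 2^{-n}=1$ requires $g_n(z)=2^{-n}$ for \emph{every} $n$, i.e. $z\in K_n$ for all $n$, not merely $z\in K_1$; so take the exhaustion increasing, $K_1\subseteq K_2\subseteq\cdots$, which is always possible. Otherwise one could have $g(z)<1$, hence $\omega(z)>1$, and submultiplicativity $\omega(z)\le\omega(s)\omega(t)$ would fail at any pair of points where $\omega=1$. With the nested exhaustion your argument is complete.
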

\begin{proof} Let $K$ and $F$ be compact subsets of $S$. It can be readily verified that in either cases (being left zero, right zero or zero) the sets
$F^{-1}K $ and $KF^{-1}$ are equal to either empty or   $S$. Put $$S_m=\{t\in
F^{-1}K:\omega(t)\leq m\}=\{t\in S: \omega(t)\leq m\}\ \ (m\in\mathbb{N}).$$ Then $S=\cup_{m\in \mathbb{N}}S_m$ and so $S$ is $\sigma$-compact. For the converse let $S=\cup_{n\in \mathbb{N}}S_n$ as a disjoint union of compact sets and let $z$ be a (left or right) zero  for $S$.  Define $\omega(z)=1$ and $\omega(x)=1+n$
for $x\in S_n$ then $\omega $ is a weight on $S$ and
$M_b(S,\omega)$ is a dual Banach algebra. \end{proof}

\begin{examples}\label {Ex}
\begin{enumerate}
\item The set $S=\mathbb{R}^+\times\mathbb{R}$ equipped with the multiplication
 $$(x,y).(x',y')=(x+x',y')\ \ \ ((x,y),(x',y')\in S)$$ and the weight
$\omega(x,y)=e^{-x}(1+|y|)$ is a weighted semigroup. In this example $[a,b]$ denotes a closed interval. As for  $F=[a,b]\times[c,d]$ and
$K=[e,f]\times[g,h],$ with $[c,d]\cap[g,h]\not=\emptyset$
\begin{eqnarray*}
% \nonumber to remove numbering (before each equation)
 F^{-1}K &=& \bigcup_{(x,y)\in F}(x,y)^{-1}K \\
  &=& \bigcup_{(x,y)\in F}\{(s,t)\in S:(x,y)(s,t)\in K\} \\
   &=&  \bigcup_{(x,y)\in F}\{(s,t)\in S:(x+s,t)\in K\} \\
   &=&  \bigcup_{(x,y)\in F}[e-x,f-x]\times[g,h]=[e-b,f-a]\times[g,h]
\end{eqnarray*}
and
\begin{eqnarray*}
% \nonumber to remove numbering (before each equation)
K  F^{-1}&=& \bigcup_{(x,y)\in F}K(x,y)^{-1} \\
  &=& \bigcup_{(x,y)\in F}\{(s,t)\in S:(s,t)(x,y)\in K\} \\
   &=&  \bigcup_{(x,y)\in F}\{(s,t)\in S:(x+s,y)\in K\} \\
   &=&  \bigcup_{(x,y)\in F}[e-x,f-x]\times\mathbb{R} =[e-b,f-a]\times\mathbb{R}
\end{eqnarray*}
Thus
$$F^{-1}K=[e-b,f-a]\times[g,h]\mbox{\quad and \quad}KF^{-1}= \left\{
\begin{array}{lr}
                                                       [e-b,f-a]\times\mathbb{R} &\mbox{if}\quad [c,d]\cap[g,h]\not=\emptyset \\
                                                       \emptyset &\mbox{if}\quad [c,d]\cap[g,h]=\emptyset
                                                     \end{array}\right.
,$$
$M_b(S)$ is not a dual Banach algebra by Corollary \ref{w}. However,  for all compact subsets $F$ and $K$ of $S$ , the maps $\frac{{\displaystyle \chi}_{F^{-1}K}}{\omega}$ and $\frac{{\displaystyle \chi}_{KF^{-1}}}{\omega}$ vanishes at infinity. So $M_b(S,\omega)$ is a dual Banach algebra with respect to $C_0(S, 1/\omega)$. This shows that  the converse of Corollary \ref{yuu} may not be  valid.
\item For the semigroup  $S=[0,\infty)$  endowed with the zero multiplication,  neither $M_b(S)$ nor $\ell_1(S)$ is a dual Banach algebra. In fact,  $S$ is neither compactly  nor  weakly cancellative semigroup.
%\item Let ${\Bbb R}$ have the ordinary  addition as its semigroup operation, and   with lower limit topology.Then ${\Bbb R}_{\ell}$ is  an Abelian cancellative semigroup
 %,so $\ell_1({\Bbb R}_{\ell})$is dual Banach algebra with respect $c_0({\Bbb R}_{\ell})$ ,but $M({\Bbb R}_{\ell})$ is not a dual Banach algebra with respect  to $C_0({\Bbb R}_{\ell})$.
% Indeed, $K=\{\frac{1}{n}:n\in {\Bbb N}\}\cup\{0\}$ is compact in ${\Bbb R}_{\ell}$ but the closed subset  $K^{-1}\{0\}=-K$ is not.
\end{enumerate}
\end{examples}

\section{ Semigroup Measure Algebras as WAP-Algebras}
  In this section, for a weighted locally compact  semi-topological semigroup $(S,\omega)$, we investigate some necessary  and sufficient condition for $M_b(S,\omega)$ being WAP-algebra. First, we provide some preliminaries.
\begin{definition}{\rm
Let $\tilde{\mathcal{F}}$ be a linear subspace of $B(S,1/\omega)$, and let $\tilde{\mathcal{F}}_r$ denote the set of all real-valued members of $\tilde{\mathcal{F}}$. A mean on $\tilde{\mathcal{F}}$ is a linear functional $\tilde{\mu}$ on $\tilde{\mathcal{F}}$ with the property that
$$\inf_{s\in S} \frac{f}{\omega} ( s ) \leq \tilde{\mu} ( f ) \leq \sup_{s\in S}\frac{f}{\omega}(s)\quad (f\in \tilde{\mathcal{F}}_r  ).$$
The set of all means on $\tilde{\mathcal{F}}$ is denoted by $M(\tilde{\mathcal{F}})$. If $\tilde{\mathcal{F}}$ is also an algebra with the multiplication  given by $f \odot g:= (f. g)/\omega\quad (f, g \in \tilde{\mathcal{F}} )$ and if  $\tilde{\mu}\in M(\tilde{\mathcal{F}})$ satisfies
$$\tilde{\mu}(f \odot g) = \tilde{\mu}(f)\tilde{\mu}(g)\quad (f, g \in \tilde{\mathcal{F}} ),$$
then $\tilde{\mu}$ is said to be multiplicative. The set of all multiplicative means on $\tilde{\mathcal{F}}$ will be denoted by $MM(\tilde{\mathcal{F}})$.

 Let $\tilde{\mathcal{F}}$  be a conjugate closed, linear subspace of $B(S,1/\omega)$ such that $\omega\in \tilde{\mathcal{F}}$.
 \begin{enumerate}
   \item[(i)] For each $s\in S$ define $\epsilon(s)\in M(\tilde{\mathcal{F}})$ by $\epsilon(s)(f)=(f/\omega)(s)\quad (f\in \tilde{\mathcal{F}})$. The mapping
$\epsilon: S\longrightarrow M(\tilde{\mathcal{F}})$ is called the evaluation mapping. If $\tilde{\mathcal{F}}$ is also an algebra, then $\epsilon(S)\subseteq MM(\tilde{\mathcal{F}})$.
   \item[(ii)] Let $\tilde X = M(\tilde{\mathcal{F}})$ (resp. $\tilde X = MM(\tilde{\mathcal{F}})$, if $\tilde{\mathcal{F}}$ is a subalgebra) be endowed with the relative
weak* topology. For each $f \in \tilde{\mathcal{F}}$ the function $\hat{f}\in C(\tilde{X})$ is defined by
$$\hat{f}(\tilde{\mu}):=\tilde{\mu}(f)\quad (\tilde{\mu}\in \tilde{X}).$$
 \end{enumerate}
}\end{definition}

Furthermore, we define $\hat{\tilde{\mathcal{F}}}:=\{\hat{f}:f\in \tilde{\mathcal{F}}\}$
\begin{remark}\begin{enumerate} \item[(i)] The mapping $f\longrightarrow \hat{f}: \tilde{\mathcal{F}}\longrightarrow C(\tilde{X})$ is clearly linear and multiplicative if $\tilde{\mathcal{F}}$
is an algebra and $\tilde{X }= MM(\tilde{\mathcal{F}})$. Also it preserves complex conjugation, and is an
isometry, since for any $f \in \tilde{\mathcal{F}}$
\begin{eqnarray*}
% \nonumber to remove numbering (before each equation)
  ||\hat{f}||&=&\sup\{|\hat{f}(\tilde{\mu})|:\tilde{\mu}\in\tilde{X}\}=\sup\{|\tilde{\mu}(f)|:\tilde{\mu}\in\tilde{X}\}\\
  &=&\sup\{|\mu(\frac{f}{\omega})|:\mu\in X\}\leq\sup\{|\mu(\frac{f}{\omega})|:\mu\in C(X)^*, ||\mu||\leq1\}  \\
  &=&||\frac{f}{\omega}||=||f||_{\omega}=\sup\{|\frac{f}{\omega}(s)|:s\in S\}=\sup\{|\epsilon(s)(f)|:s\in S\}\\
  &=& \sup\{|\hat{f}(\epsilon(s))|:s\in S\}\leq||\hat{f}||,
\end{eqnarray*}
 where $X=M(\mathcal{F})$ and $\mathcal{F}=\{ f/\omega:f\in\tilde{\mathcal{F}}\}$. Note that $\hat{f}(\epsilon(s))=\epsilon(s)(f)=(\frac{f}{\omega})(s) (f\in\tilde{F}, s\in S)$. This identity may be written in terms of dual map $\tilde {\epsilon}^*:C(\tilde{X})\longrightarrow C(S,\omega)$ as $\epsilon^*(\hat{f})=f$ for $f\in \tilde {F}$.
\item[(ii)]  Let $\tilde{\mathcal{F}}$ be a conjugate closed linear subspace of $B(S,1/\omega)$, containing $\omega$. Then $M(\tilde{\mathcal{F}})$ is convex and weak* compact, $co(\epsilon(S))$ is weak* dense in $M(\tilde{\mathcal{F}})$, $\tilde{\mathcal{F}}^*$ is the weak* closed
linear span of $\epsilon(S)$, $\epsilon : S \longrightarrow M(\tilde{\mathcal{F}})$ is weak* continuous, and if $\tilde{\mathcal{F}}$ is also an algebra, then
$MM(\tilde{\mathcal{F}})$ is weak* compact and $\epsilon(S)$ is weak* dense in $MM(\tilde{\mathcal{F}})$.
\item[(iii)] Let $\tilde{\mathcal{F}}$  be a $C^*$-subalgebra of $B(S,1/\omega)$, containing $\omega$. If $\tilde {X}$ denotes the space $MM(\tilde{\mathcal{F}})$ with the relative weak* topology, and if $\epsilon : S \longrightarrow \tilde{X}$ denotes the evaluation mapping, then the mapping $f\longrightarrow\hat{f} :\tilde{\mathcal{F}}\longrightarrow C(\tilde {X})$ is an isometric isomorphism with the inverse $\epsilon^*: C(\tilde {X})\longrightarrow\tilde{\mathcal{F}}$.
 \end{enumerate}
 \end{remark}
Let $\tilde{\mathcal{F}}=wap(S,1/\omega)$. Then $\tilde{\mathcal{F}}$ is a $C^*$-algebra and a subspace of $WAP(M_b(S,\omega))$, see \cite[Theorem1.6, Theorem3.3]{Lashkarizadeh}. Set $\tilde{X}=MM(\tilde{\mathcal{F}})$. By the above remark $wap(S, 1/\omega)\cong C(\tilde{X})$ and so $$M_b(\tilde{X})\cong C(\tilde{X})^*\cong wap(S, 1/\omega)^* \subseteq WAP(M_b(S,\omega))^*.$$

 Let  $\epsilon :S\longrightarrow \tilde{X}$  be the evaluation mapping. We also define
$\bar{\epsilon}: M_b(S,\omega)\longrightarrow M_b(\tilde {X})$, by $\langle\bar{\epsilon}(\mu),f\rangle=\int_Sf\omega d\mu$  for $f\in wap(S, 1/\omega)\cong C(\tilde{X})$. Then for every Borel set $B$ in $\tilde {X}$, $$\bar{\epsilon}(\mu)(B)=(\mu\omega)(\epsilon^{-1}(B)).$$
 In particular, $\bar{\epsilon}(\frac{\delta_x}{\omega(x)})=\delta_{\epsilon(x)}$.

 The next theorem is the main result of this section.
\begin{theorem}\label{mwap}{\rm
For every weighted locally compact  semi-topological semigroup
$(S,\omega)$ the following statements
 are equivalent:
 \begin{enumerate}
  \item  The evaluation map $\epsilon:S\longrightarrow \tilde {X}$ is   one to one, where $\tilde{X}=MM(wap(S,1/\omega))$;
  \item $\bar{\epsilon}: M_b(S,\omega)\longrightarrow M_b(\tilde {X})$ is an isometric isomorphism;
\item $M_b(S,\omega)$ is a {\rm WAP}-algebra.
\end{enumerate}
}\end{theorem}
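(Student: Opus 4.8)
The plan is to prove the cycle $(1)\Rightarrow(2)\Rightarrow(3)\Rightarrow(1)$. The conceptual key, which I would isolate at the outset, is the identity $\bar{\epsilon}(\mu)=\hat{\mu}|_{wap(S,1/\omega)}$: that is, $\bar{\epsilon}$ is precisely the canonical embedding $\mu\mapsto\hat{\mu}$ of $M_b(S,\omega)$ into $WAP(M_b(S,\omega))^*$ followed by restriction to the closed subspace $wap(S,1/\omega)$, under the identifications $wap(S,1/\omega)\cong C(\tilde{X})$ and $M_b(\tilde{X})\cong wap(S,1/\omega)^*$. I would verify this first on normalized point masses, where it reduces to $\bar{\epsilon}(\delta_x/\omega(x))=\delta_{\epsilon(x)}$ together with $\langle\delta_{\epsilon(x)},\hat{f}\rangle=\hat{f}(\epsilon(x))=(f/\omega)(x)$, and then extend by linearity and the weak* density of the span of $\{\delta_x/\omega(x)\}$. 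Since restriction to a subspace is norm-nonincreasing, and the canonical embedding into the dual of a subspace of $M_b(S,\omega)^*$ itself has norm at most one, this yields the two universal inequalities $\|\bar{\epsilon}(\mu)\|\le\|\hat{\mu}\|_{WAP^*}\le\|\mu\|_{\omega}$ that drive the whole argument.

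For $(1)\Rightarrow(2)$ I would first note that $\epsilon$ is injective exactly when the family $\mathcal{B}=\{f/\omega:f\in wap(S,1/\omega)\}$ separates the points of $S$, directly from $\epsilon(x)(f)=(f/\omega)(x)$. This $\mathcal{B}$ is a conjugate-closed subalgebra of $C_b(S)$ containing the constants (as $\omega\in wap(S,1/\omega)$ gives $1\in\mathcal{B}$), the product structure coming from $f\odot g=fg/\omega$. Unravelling the identifications, $\|\bar{\epsilon}(\mu)\|=\sup\{|\int_S g\,d(\omega\mu)|:g\in\mathcal{B},\ \|g\|_\infty\le1\}$, whereas $\|\mu\|_{\omega}=\|\omega\mu\|=\sup\{|\int_S g\,d(\omega\mu)|:g\in C_0(S),\ \|g\|_\infty\le1\}$. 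The isometry $\|\bar{\epsilon}(\mu)\|=\|\mu\|_{\omega}$ then follows from a Stone--Weierstrass argument combined with the regularity (tightness) of the finite measure $\omega\mu$: given $\varepsilon>0$, pick a compact $K$ carrying all but $\varepsilon$ of $|\omega\mu|$ and use that $\mathcal{B}|_K$ is uniformly dense in $C(K)$ to approximate a near-optimal test function, controlling the behaviour off $K$ by the small residual mass. That $\bar{\epsilon}$ is multiplicative, hence an isometric isomorphism onto its range, is then routine, since $\epsilon:S\to\tilde{X}$ is a semigroup homomorphism into the (Arens-type) compact semigroup $\tilde{X}$ and $\bar{\epsilon}$ is the induced pushforward of measures.

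The implication $(2)\Rightarrow(3)$ is immediate from the universal inequalities: if $\bar{\epsilon}$ is isometric, then $\|\mu\|_{\omega}=\|\bar{\epsilon}(\mu)\|\le\|\hat{\mu}\|_{WAP^*}\le\|\mu\|_{\omega}$ forces $\|\hat{\mu}\|_{WAP^*}=\|\mu\|_{\omega}$, so the canonical embedding is isometric and in particular bounded below, i.e.\ $M_b(S,\omega)$ is a WAP-algebra. For $(3)\Rightarrow(1)$ I would argue contrapositively: if $\epsilon$ is not injective, choose $x\ne y$ with $\epsilon(x)=\epsilon(y)$ and set $\mu=\delta_x/\omega(x)-\delta_y/\omega(y)$, so that $\omega\mu=\delta_x-\delta_y$ and $\|\mu\|_{\omega}=2$. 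For every $\gamma\in WAP(M_b(S,\omega))$ the function $s\mapsto\langle\gamma,\delta_s\rangle$ lies in $wap(S,1/\omega)$ by \cite[Theorem 3.3]{Lashkarizadeh}; hence $\epsilon(x)=\epsilon(y)$ gives $\langle\gamma,\delta_x\rangle/\omega(x)=\langle\gamma,\delta_y\rangle/\omega(y)$, so $\langle\gamma,\mu\rangle=0$. Thus $\hat{\mu}=0$ while $\mu\ne0$, the embedding is not bounded below, and $M_b(S,\omega)$ is not a WAP-algebra, a contradiction.

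I expect the main obstacle to be the isometry step in $(1)\Rightarrow(2)$, namely passing from ``$\mathcal{B}$ separates points'' to ``the unit ball of $\mathcal{B}$ computes the full total variation of $\omega\mu$''. Point-separation by itself is insufficient; it must be leveraged through the conjugate-closed subalgebra structure (Stone--Weierstrass on compacta) and the regularity of $\omega\mu$ to handle the tail off a large compact set. A secondary subtlety is the careful bookkeeping of the various $\omega$-normalizations, so that the pairing $\langle\bar{\epsilon}(\mu),\hat{f}\rangle$ is consistently identified with $\int_S f\,d\mu$ and with the restriction of $\hat{\mu}$; once the identity $\bar{\epsilon}(\mu)=\hat{\mu}|_{wap(S,1/\omega)}$ is secured, the implications $(2)\Rightarrow(3)$ and $(3)\Rightarrow(1)$ become essentially formal.
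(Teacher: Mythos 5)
Your overall architecture is sound: the identity $\bar{\epsilon}(\mu)=\hat{\mu}|_{wap(S,1/\omega)}$, the two universal inequalities $\|\bar{\epsilon}(\mu)\|\le\|\hat{\mu}\|_{WAP(M_b(S,\omega))^*}\le\|\mu\|_{\omega}$, and the resulting proofs of $(2)\Rightarrow(3)$ and $(3)\Rightarrow(1)$ are correct, granted the same external inputs the paper itself uses, namely $wap(S,1/\omega)\subseteq WAP(M_b(S,\omega))$ and $WAP(\ell_1(S,\omega))=wap(S,1/\omega)$. (One citation quibble: in $(3)\Rightarrow(1)$ the fact you need is that WAP-functionals restrict to continuous $\omega$-wap \emph{functions}, i.e.\ essentially $WAP(\ell_1(S,\omega))\subseteq wap(S,1/\omega)$, which the paper extracts from the double limit criterion; Theorem 3.3 of Lashkarizadeh is invoked in the paper for the \emph{opposite} inclusion.) The genuine gap is the isometry step in $(1)\Rightarrow(2)$. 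Stone--Weierstrass applied to $\mathcal{B}|_K$, where $\mathcal{B}=\{f/\omega:f\in wap(S,1/\omega)\}$, produces $h\in\mathcal{B}$ with $\|h|_K-g\|_\infty<\varepsilon$, but gives \emph{no bound} on $\sup_{S\setminus K}|h|$: your tail estimate reads $|\int_{S\setminus K}h\,d(\omega\mu)|\le\|h\|_\infty\,|\omega\mu|(S\setminus K)$, and since $\|h\|_\infty$ is uncontrolled as the approximation on $K$ improves, "small residual mass" does not make this term small. The missing idea is a norm-controlled approximant, and it is available in two ways: (a) truncate, using that $\mathcal{B}$ is a uniformly closed, unital, conjugate-closed subalgebra of $C_b(S)$ (isometrically $*$-isomorphic to $C(\tilde{X})$) and hence stable under composition with the radial retraction $r$ of $\mathbb{C}$ onto the closed unit disc; $r$ is $1$-Lipschitz (metric projection onto a convex set), so $r\circ h\in\mathcal{B}$, $\|r\circ h\|_\infty\le1$, and $\|(r\circ h)|_K-g\|_\infty<\varepsilon$ because $r\circ g=g$; or (b) bypass Stone--Weierstrass entirely: by hypothesis (1), $\epsilon|_K$ is a continuous injection of a compact set into a Hausdorff space, hence a homeomorphism onto $\epsilon(K)$, so transfer the optimal $g\in C(K)$, $\|g\|_\infty\le 1$, to $C(\epsilon(K))$, extend by Tietze to $G\in C(\tilde{X})$ with $\|G\|_\infty\le1$, and pull back to $h=G\circ\epsilon\in\mathcal{B}$ with $h|_K=g$ exactly. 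With either repair, your $(1)\Rightarrow(2)$ closes.

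For comparison: the paper proves $(1)\Rightarrow(2)$ measure-theoretically (Hahn decomposition of $\bar{\epsilon}(\mu)$, pulling the decomposing Borel sets back through $\epsilon$ and using injectivity to keep the preimages disjoint), so your function-theoretic route is a genuine alternative rather than a reconstruction. More significantly, your $(2)\Rightarrow(3)$ is simpler and more robust than the paper's: the paper factors an isometric representation of the dual Banach algebra $M_b(\tilde{X})$ on a reflexive space through $\bar{\epsilon}$, which tacitly requires $\bar{\epsilon}$ to be an algebra homomorphism; but in the weighted setting $\bar{\epsilon}\bigl(\frac{\delta_s}{\omega(s)}*\frac{\delta_t}{\omega(t)}\bigr)=\Omega(s,t)\,\delta_{\epsilon(st)}$, which has total variation $\Omega(s,t)$ and so cannot equal $\delta_{\epsilon(s)}\delta_{\epsilon(t)}$ unless $\Omega(s,t)=1$. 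Your argument uses only the norm inequalities and the inclusion $wap(S,1/\omega)\subseteq WAP(M_b(S,\omega))$, requiring no multiplicativity of $\bar{\epsilon}$ whatsoever, and in this respect improves on the paper.
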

\begin{proof}(1) $\Rightarrow$  (2). Take $\mu\in M_b(S,\omega)$, say $\mu=\mu_1-\mu_2+i(\mu_3-\mu_4)$, where $\mu_j\in M_b(S,\omega)^+$. Set $\nu_j=\bar{\epsilon}(\mu_j)\in M_b(\tilde {X})^+$ for $j=1,2,3,4$, and set
$$\nu=\bar{\epsilon}(\mu)=\nu_1-\nu_2+i(\nu_3-\nu_4).$$
Take $\delta > 0$. For each $j$, there exists Borel set $B_j$ in $\tilde {X}$
 such that $\nu_j(B)\geq 0$  for each Borel subset $B$ of $B_j$ and
 $\sum_{j=1}^4\nu_j(B_j)>||\nu||-\delta$. In fact, by Hahn decomposition theorem for signed measures $\lambda_1=\nu_1-\nu_2$ and $\lambda_2=\nu_3-\nu_4$ there exist  four Borel sets $P_1$, $P_2$, $N_1$ and $N_2$ in $\tilde {X}$ such that
 $$P_1\cup N_1=\tilde {X},\quad P_1\cap N_1=\emptyset,\quad P_2\cup N_2=\tilde {X},\quad P_2\cap N_2=\emptyset$$ and for every Borel set $E$ of $\tilde {X}$ we have,
 $$\nu_1(E)=\lambda_1(P_1\cap E),\ \nu_2(E)=-\lambda_1(N_1\cap E),\ \nu_3(E)=\lambda_2(P_2\cap E),\ \nu_4(E)=-\lambda_2(N_2\cap E).$$ that is $\nu_1,\nu_2,\nu_3,\nu_4$ are  concentrated respectively on $P_1,N_1,P_2,N_2$.

Set $D_1:=P_1\cap N_2,D_2:=N_1\cap P_2, D_3:=P_2\cap P_1, D_4:=N_2\cap N_1$.  Then the family $\{D_1,D_2,D_3,D_4\}$ is a partition of $\tilde {X}$.
 Also for $\delta > 0$ there is a compact set $K$ for which  $$||\nu||-\delta\leq\sum_{j=1}^4||{\nu_j}_{|_{D_j}}||-\delta\leq\sum_{j=1}^4{\nu_j}_{|_{D_j}}(K)
 =\sum_{j=1}^4{\nu_j}(D_j\cap K).$$ Set $B_j=D_j\cap K$. Then  the sets $B_1,B_2,B_3,B_4$ are pairwise disjoint.

Set $C_j=(\epsilon)^{-1}(B_j)$, a Borel set in $S$. Then $(\mu_j\omega)(C_j)=\nu_j(B_j).$
  Since $\epsilon$ is injection, the sets $C_1,C_2,C_3,C_4$ are pairwise disjoint,
and so $$||\mu||_{\omega}\geq \sum_{j=1}^4|\mu\omega(C_j)|\geq \sum_{j=1}^4(\mu_j\omega)(C_j)=\sum_{j=1}^4\nu_j(B_j)>||\nu||-\delta$$
This holds for each $\delta> 0$,  so $||\mu||_{\omega}\geq ||\nu||$.
A similar argument shows that $||\mu||_{\omega}\leq ||\nu||$. Thus $||\mu||_{\omega}= ||\nu||$.

(2)$\Rightarrow$(1). Let $P(S,\omega)$ denote  the subspace  of all probability measures of $M_b(S,\omega)$ and $ext(P(S,\omega))$  the extreme points of unit ball of $P(S,\omega)$. Then $ext(P(S,\omega))=\{\frac{\delta_x}{\omega(x)}:x\in S\}\cong S$ and $ext(P(\tilde {X})\cong \tilde {X}$, see \cite[p.151]{Conway}. By injectivity of $\bar{\epsilon}$, it maps the extreme
points of the unit ball onto the extreme points of the unit ball, thus  $\epsilon:S\longrightarrow \tilde {X}$ is a  one to one map.

   (2)$\Rightarrow$(3). Since $\tilde {X}$ is compact,  $M_b(\tilde {X})$ is  a dual Banach algebra with respect to $C(\tilde {X})$, so it has an isometric representation  $\psi$ on a reflexive Banach space $E$, see \cite{Da2}. In the following commutative diagram,
    \begin{diagram}
 M_b(S,\omega) &\rTo^{\bar{\epsilon}} &M_b(\tilde {X}) \\
&\rdTo^{\phi} &\dTo_{\psi}& \\
& &B(E)
\end{diagram}  If $\bar{\epsilon}$ is isometric, then  so  is  $\phi$.

     Thus $M_b(S,\omega)$ has an isometric representation on a reflexive Banach  space $E$ if  $\bar{\epsilon}$ is an isometric isomorphism.
  So $M_b(S,\omega)$ is a {\rm WAP}-algebra if  $\bar{\epsilon}$ is an isometric isomorphism.

(3)$\Rightarrow$(1). Let $M_b(S,\omega)$ be a {\rm WAP}-algebra. Since $\ell_1(S,\omega)$ is a norm closed subalgebra of $M_b(S,\omega)$, then $\ell_1(S,\omega)$ is a {\rm WAP}-algebra. Using the double limit criterion, it is a simple matter to check that $wap(S,1/\omega)=WAP(\ell_1(S,\omega))$ (see also \cite[Theorem3.7]{Lashkarizadeh}) where we treat $\ell^\infty(S,1/\omega)$ as an $\ell_1(S,\omega)$-bimodule. Then $\bar{\epsilon}:\ell_1(S,\omega)\longrightarrow wap(S,1/\omega)^*$ is  an isometric isomorphism. Since $wap(S,1/\omega)$ is a $C^*$-algebra, as   (2)$\Rightarrow$(1),  $\epsilon:S\longrightarrow \tilde {X}$ is   one to one.
   \end{proof}
   \begin{corollary}{\rm The following statement are equivalent.
   \begin{enumerate}
  \item $\ell_1(S,\omega)$ is a {\rm WAP}-algebra;
\item $M_b(S,\omega)$ is a {\rm WAP}-algebra.
\end{enumerate}
  } \end{corollary}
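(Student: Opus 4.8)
The plan is to derive both implications directly from Theorem~\ref{mwap}, exploiting that $\ell_1(S,\omega)$ sits inside $M_b(S,\omega)$ as a norm-closed subalgebra and that $WAP(\ell_1(S,\omega))=wap(S,1/\omega)$, the latter already recorded (via the double limit criterion) inside the proof of that theorem.

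For $(2)\Rightarrow(1)$ I would use the fact that the WAP-algebra property passes to norm-closed subalgebras. The most transparent route is the reflexive-representation characterization noted in the introduction: if $M_b(S,\omega)$ is a WAP-algebra then it admits an isometric representation on some reflexive Banach space $E$, and restricting this representation to the closed subalgebra $\ell_1(S,\omega)$ produces an isometric representation of $\ell_1(S,\omega)$ on the same space $E$. Hence $\ell_1(S,\omega)$ is a WAP-algebra. This is precisely the opening observation of the proof of $(3)\Rightarrow(1)$ in Theorem~\ref{mwap}.

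For the converse $(1)\Rightarrow(2)$ I would replay the remainder of that same argument. Assume $\ell_1(S,\omega)$ is a WAP-algebra. Then the canonical embedding of $\ell_1(S,\omega)$ into $WAP(\ell_1(S,\omega))^*=wap(S,1/\omega)^*$ is bounded below, so $\bar\epsilon$ restricted to $\ell_1(S,\omega)$ is an isometric isomorphism onto its image; because $wap(S,1/\omega)$ is a $C^*$-algebra with $wap(S,1/\omega)\cong C(\tilde X)$, the extreme-point argument of $(2)\Rightarrow(1)$ in Theorem~\ref{mwap}, using $ext(P(S,\omega))\cong S$ and $ext(P(\tilde X))\cong\tilde X$, forces the evaluation map $\epsilon:S\to\tilde X$ to be injective. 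This is condition $(1)$ of Theorem~\ref{mwap}, equivalent to its condition $(3)$, which is exactly the statement that $M_b(S,\omega)$ is a WAP-algebra.

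The main obstacle, and the only step that is not bookkeeping, is the transition in $(1)\Rightarrow(2)$ from ``the embedding into $wap(S,1/\omega)^*$ is bounded below'' to ``$\epsilon$ is one-to-one.'' This hinges on the $C^*$-structure of $wap(S,1/\omega)$ and on the extreme-point identifications above; once these are in place the injectivity of $\epsilon$ is immediate, and Theorem~\ref{mwap} closes the loop. Everything else, namely the hereditariness of the WAP-algebra property to closed subalgebras and the identification of $WAP(\ell_1(S,\omega))$ with $wap(S,1/\omega)$, is either standard or already in hand from the preceding theorem.
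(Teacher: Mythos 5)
Your proposal is correct and takes essentially the same route as the paper: the corollary is left there as an immediate consequence of Theorem \ref{mwap}, whose proof of $(3)\Rightarrow(1)$ contains exactly the two ingredients you isolate, namely heredity of the WAP-property to the norm-closed subalgebra $\ell_1(S,\omega)$ (via restriction of an isometric reflexive representation) and the passage from the embedding of $\ell_1(S,\omega)$ into $wap(S,1/\omega)^*$ to injectivity of $\epsilon:S\to\tilde{X}$ using the $C^*$-structure and the extreme-point identification. Once $\epsilon$ is injective, the implication $(1)\Rightarrow(3)$ of Theorem \ref{mwap} gives that $M_b(S,\omega)$ is a WAP-algebra, exactly as you argue.
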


For $\omega=1$, it is clear that $\tilde{X}=S^{wap}$, and the map  $\epsilon:S\longrightarrow S^{wap}$ is   one to one if and only if $wap(S)$ separates the points of $S$, see \cite{BJM}.
\begin{corollary}{\rm For a locally compact  semi-topological semigroup $S$, the following statements are equivalent:
 \begin{enumerate}
 \item $M_b(S)$ is a {\rm WAP}-algebra;
 \item  $\ell_1(S)$ is a {\rm WAP}-algebra;
 \item The evaluation map $\epsilon:S\longrightarrow S^{wap}$ is   one to one;
 \item $wap(S)$ separates the points of $S$.
  \end{enumerate}
}\end{corollary}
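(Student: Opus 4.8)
The plan is to recognize this corollary as nothing more than the unweighted specialization ($\omega = 1$) of the apparatus already built in Theorem~\ref{mwap} and in the corollary immediately following it; consequently I would not attempt any fresh analysis, but instead identify the relevant objects and chain together three equivalences that are already available.

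First I would put $\omega = 1$ throughout. Then $wap(S,1/\omega)$ collapses to the classical $C^*$-algebra $wap(S)$ of \cite{BJM}, and its space of multiplicative means is $\tilde{X} = MM(wap(S)) = S^{wap}$, the $wap$-compactification, exactly as recorded in the remark preceding the statement. Under this identification the abstract evaluation map $\epsilon : S \to \tilde{X}$ of Theorem~\ref{mwap} becomes the canonical map $\epsilon : S \to S^{wap}$ occurring in item~(3). With the objects matched up, the equivalences read off immediately. Applying Theorem~\ref{mwap} with $\omega = 1$ gives that $M_b(S)$ is a {\rm WAP}-algebra (its condition~(3)) precisely when $\epsilon : S \to S^{wap}$ is injective (its condition~(1)), which is the equivalence (1)$\Leftrightarrow$(3); the intermediate isometry condition on $\bar{\epsilon}$ is used internally but need not be listed. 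Specializing the (unlabelled) corollary following Theorem~\ref{mwap} to $\omega = 1$ yields (1)$\Leftrightarrow$(2), the equivalence of $\ell_1(S)$ and $M_b(S)$ being {\rm WAP}-algebras. Finally (3)$\Leftrightarrow$(4) is the classical fact that the compactification map is one-to-one exactly when $wap(S)$ separates the points of $S$, again from \cite{BJM}. Chaining these, one obtains (2)$\Leftrightarrow$(1)$\Leftrightarrow$(3)$\Leftrightarrow$(4), which closes the argument.

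The only genuinely substantive point, and the main (if modest) obstacle, is to confirm the identification $\tilde{X} = S^{wap}$ at the level of both the $C^*$-structure and the induced semigroup structure, so that the abstract evaluation map supplied by Theorem~\ref{mwap} truly coincides with the classical $wap$-compactification map and the point-separation criterion (3)$\Leftrightarrow$(4) applies verbatim. I would also note that, since $S$ is here only assumed semi-topological rather than topological as in Section~2, it is worth checking that $wap(S)$ and $S^{wap}$ behave as usual in this setting; but this is precisely the generality of \cite{BJM}, so no additional hypotheses are required and the proof is in the end pure bookkeeping.
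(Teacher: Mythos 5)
Your proposal is correct and follows essentially the same route as the paper, which offers no separate proof but derives the corollary exactly as you do: the remark preceding it identifies $\tilde{X}=S^{wap}$ and the injectivity/separation equivalence for $\omega=1$ via \cite{BJM}, so the statement reads off from Theorem~\ref{mwap} together with the preceding corollary on $\ell_1(S,\omega)$ versus $M_b(S,\omega)$. Your added care about matching the abstract evaluation map with the classical compactification map is a fair (and correctly resolved) bookkeeping point that the paper simply declares ``clear.''
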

\begin{definition}{\rm Let $X$, $Y$ be sets and $f$ be a complex-valued function on $X\times Y$.
\begin{enumerate}
  \item We say that $f$ is a cluster on $X\times Y$ if for each pair of sequences $(x_n)$, $(y_m)$ of distinct elements of $X,Y$, respectively
\begin{equation}\label{fg}
  \lim_n\lim_mf(x_n,y_m)=\lim_m\lim_nf(x_n,y_m)
\end{equation}
whenever both sides of (\ref{fg}) exist.
  \item If $f$ is cluster and both sides of \ref{fg} are zero (respectively positive) in all cases, we say that $f$ is $0$-cluster(respectively positive cluster).
\end{enumerate}

}\end{definition}
In general $\{f\omega:f\in wap(S)\}\not= wap(S,1/\omega)$.
By using \cite[Lemma1.4]{BR} the following is immediate.
\begin{lemma}\label{ghjk}{\rm
Let $\Omega(x,y)=\frac{\omega(xy)}{\omega(x)\omega(y)}$, for $x,y\in S$. Then
\begin{enumerate}
  \item If $\Omega$ is cluster, then $\{f\omega:f\in wap(S)\}\subseteq wap(S,1/\omega)$;
  \item If $\Omega$ is positive cluster, then $wap(S,1/\omega)=\{f\omega:f\in wap(S)\}$.

\end{enumerate}
}\end{lemma}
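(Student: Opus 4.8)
The plan is to convert both assertions into closure properties of cluster functions and then to quote \cite[Lemma 1.4]{BR}. First I would record the dictionary supplied by the double limit (Grothendieck) criterion: a bounded continuous $f$ lies in $wap(S)$ exactly when the two-variable function $(x,y)\mapsto f(xy)$ is cluster on $S\times S$, and a function $g\in C(S,1/\omega)$ lies in $wap(S,1/\omega)$ exactly when $(x,y)\mapsto g(xy)/(\omega(x)\omega(y))$ is cluster; these are just the double limit reformulations of the relative weak compactness of $\{R_sf\}$ and of $\{R_sg/(\omega(s)\omega)\}$. The bridge between the two worlds is the pair of elementary identities
\[
\frac{(f\omega)(xy)}{\omega(x)\omega(y)}=f(xy)\,\Omega(x,y),\qquad
\frac{g(xy)}{\omega(x)\omega(y)}=\Bigl(\tfrac{g}{\omega}\Bigr)(xy)\,\Omega(x,y),
\]
obtained by cancelling $\omega(xy)$. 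Every function in sight is bounded: $0<\Omega\le 1$ by submultiplicativity of $\omega$, while $f$ and $g/\omega$ are bounded on $S$; and $f\omega\in C(S,1/\omega)$ automatically since $(f\omega)/\omega=f\in C(S)$.

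For part (1) I would take $f\in wap(S)$, so $F(x,y):=f(xy)$ is cluster, and assume $\Omega$ cluster. By the first identity $f\omega\in wap(S,1/\omega)$ is equivalent to $F\Omega$ being cluster, so it suffices to know that the product of two bounded cluster functions is cluster, which is the multiplicative half of \cite[Lemma 1.4]{BR}. The underlying argument is the standard diagonal extraction: given sequences $(x_n),(y_m)$ of distinct points along which both iterated limits of $F\Omega$ exist, pass (using boundedness) to subsequences along which the inner limits $\lim_m F$, $\lim_m\Omega$, $\lim_n F$, $\lim_n\Omega$ and all four iterated limits of $F$ and of $\Omega$ exist. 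The cluster hypothesis then forces $\lim_n\lim_m F=\lim_m\lim_n F=:L_F$ and likewise $L_\Omega$; since on these subsequences each inner limit of $F\Omega$ is the product of the corresponding inner limits of $F$ and $\Omega$, both iterated limits of $F\Omega$ collapse to $L_F L_\Omega$, so $F\Omega$ is cluster.

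For part (2) the inclusion $\supseteq$ is immediate from (1), since a positive cluster function is in particular cluster. For $\subseteq$ I would take $g\in wap(S,1/\omega)$, so by the second identity $H:=G\,\Omega$ is cluster, where $G(x,y):=(g/\omega)(xy)$; the goal is that $G$ is cluster, i.e. $g/\omega\in wap(S)$, which exhibits $g=(g/\omega)\omega$ in $\{f\omega:f\in wap(S)\}$. Because $\Omega>0$ everywhere we may write $G=H/\Omega$, and the task becomes the divisive half of \cite[Lemma 1.4]{BR}: if $H$ is cluster and $\Omega$ is positive cluster then $H/\Omega$ is cluster. Running the same diagonal extraction, the iterated limits of $\Omega$ now satisfy $\lim_n\lim_m\Omega=\lim_m\lim_n\Omega=:\alpha>0$ by positivity, so along the tails the denominators are bounded away from $0$; hence each iterated limit of $G=H/\Omega$ equals $L_H/\alpha$ in both orders, and $G$ is cluster.

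The dictionary and the bookkeeping of simultaneous subsequences are routine; the one place where the hypotheses genuinely bite — and the main obstacle to watch — is the division step in (2). There, clusterness of $H$ alone is not enough, because the inner limits of $\Omega$ could tend to $0$ and make $H/\Omega$ blow up along a sequence; it is precisely the strict positivity of the iterated limits furnished by the positive cluster assumption that keeps the denominators bounded away from zero in the limit and legitimises passing from $H\Omega^{-1}$ back to $G$.
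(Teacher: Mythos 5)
Your proof is correct and takes essentially the same route the paper intends: the paper's entire proof is the remark that the lemma is ``immediate'' from \cite[Lemma 1.4]{BR}, and your dictionary (double limit criterion identifying $wap(S)$ and $wap(S,1/\omega)$ with cluster conditions on $f(xy)$ and $g(xy)/(\omega(x)\omega(y))$) together with the product/quotient stability of bounded cluster functions is exactly what that citation encapsulates. The details you supply --- the identities $(f\omega)(xy)/(\omega(x)\omega(y))=f(xy)\Omega(x,y)$, the diagonal subsequence extraction, and the observation that positivity of the iterated limits of $\Omega$ keeps denominators away from zero in the division step of (2) --- correctly flesh out what the paper leaves implicit.
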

It should be noted that if $M_b(S)$ is Arens regular (resp. dual Banach algebra) then $M_b(S,\omega)$ is so.  We don't know that if $M_b(S)$ is {\rm WAP}-algebra, then $M_b(S,\omega)$ is so. The following Lemma give a partial answer to this question.
\begin{corollary}\label{weighted} {\rm Let $S$ be  a locally compact  topological semigroup with a Borel measurable weight function   $\omega$ such that $\Omega$ is cluster on $S\times S$.
 \begin{enumerate}
 \item  If $M_b(S)$ is a {\rm WAP}-algebra, then so is $M_b(S,\omega)$;
 \item If $\ell_1(S)$ is a {\rm WAP}-algebra, then  so is $\ell_1(S,\omega)$.
 \end{enumerate}
}\end{corollary}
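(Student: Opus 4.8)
The plan is to reduce both statements to the point-separation characterization of the WAP-algebra property established in Theorem \ref{mwap} and its corollaries, and then to transport a separating family of weakly almost periodic functions from the unweighted setting into the weighted one by multiplying by $\omega$ — which is precisely what the clustering hypothesis on $\Omega$ licenses through Lemma \ref{ghjk}.

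First I would record the relevant reformulations. By Theorem \ref{mwap}, $M_b(S,\omega)$ is a WAP-algebra exactly when the evaluation map $\epsilon : S \to \tilde{X} = MM(wap(S,1/\omega))$ is one to one. Since $\epsilon(s)(f) = (f/\omega)(s)$ for $f \in wap(S,1/\omega)$, two means $\epsilon(s)$ and $\epsilon(t)$ coincide if and only if $(f/\omega)(s) = (f/\omega)(t)$ for every $f \in wap(S,1/\omega)$; hence injectivity of $\epsilon$ is equivalent to the family $\{f/\omega : f \in wap(S,1/\omega)\}$ separating the points of $S$. In the unweighted case the corollary following Theorem \ref{mwap} records that $M_b(S)$ (equivalently $\ell_1(S)$) is a WAP-algebra if and only if $wap(S)$ separates the points of $S$.

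Next I would carry out the transport argument for part (1). Fix distinct points $s, t \in S$. Because $M_b(S)$ is a WAP-algebra, $wap(S)$ separates points, so there exists $g \in wap(S)$ with $g(s) \neq g(t)$. As $\Omega$ is cluster on $S \times S$, Lemma \ref{ghjk}(1) gives $g\omega \in wap(S,1/\omega)$. Putting $f = g\omega$, we obtain $(f/\omega)(s) = g(s) \neq g(t) = (f/\omega)(t)$, so $\epsilon(s)(f) \neq \epsilon(t)(f)$ and therefore $\epsilon(s) \neq \epsilon(t)$. Since $s$ and $t$ were arbitrary, $\epsilon$ is one to one, and Theorem \ref{mwap} yields that $M_b(S,\omega)$ is a WAP-algebra.

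Part (2) would then follow immediately: by the corollary asserting the equivalence of the WAP-algebra property for $\ell_1(S,\omega)$ and $M_b(S,\omega)$ (and the analogous equivalence in the unweighted case), the hypothesis that $\ell_1(S)$ is a WAP-algebra coincides with $M_b(S)$ being one, and the desired conclusion for $\ell_1(S,\omega)$ coincides with that for $M_b(S,\omega)$, so (2) is a restatement of (1) through these equivalences. The only step that carries genuine content — and the one I would treat as the crux — is the invocation of Lemma \ref{ghjk}(1): verifying that the clustering of $\Omega$ is exactly the condition guaranteeing $g\omega \in wap(S,1/\omega)$, everything else being formal once the point-separation reformulation is in place. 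I would also emphasize that no converse is asserted, in keeping with the remark preceding the corollary that it remains unknown whether the WAP-algebra property passes from $M_b(S,\omega)$ back to $M_b(S)$ in general.
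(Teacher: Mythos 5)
Your proposal is correct and follows essentially the same route as the paper's own proof: reduce to the point-separation characterization of the WAP-algebra property via Theorem \ref{mwap}, transport a separating function $g\in wap(S)$ to $g\omega\in wap(S,1/\omega)$ using Lemma \ref{ghjk}(1), and deduce part (2) from part (1) through the $\ell_1$--$M_b$ equivalence. The paper's proof is merely a terser version of the same argument, so your write-up simply makes explicit the steps it leaves implicit.
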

\begin{proof}(1) Suppose that $M_b(S)$ is a {\rm WAP}-algebra so $wap(S)$ separates the points of $S$. By lemma\ref{ghjk} for every $f\in wap(S)$, $f\omega\in wap(S,1/\omega)$. Thus the evaluation map $\epsilon:S\longrightarrow \tilde {X}$ is   one to one.

  (2) follows from (1).
\end{proof}
\begin{corollary}\label{homomo} {\rm For a locally compact  semi-topological semigroup $S$,
  \begin{enumerate}
 \item  If $C_0(S)\subseteq wap(S)$, then  the measure algebra $M_b(S)$ is a {\rm WAP}-algebra.
\item  If  $S$ is discrete and $c_0(S)\subseteq wap(S)$, then $\ell_1(S)$ is a {\rm WAP}-algebra.
 \end{enumerate}
}\end{corollary}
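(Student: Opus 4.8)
The plan is to reduce the statement to the point-separation criterion already established. Recall that the corollary immediately following Theorem~\ref{mwap} asserts that, for a locally compact semi-topological semigroup $S$, the algebra $M_b(S)$ is a WAP-algebra if and only if $wap(S)$ separates the points of $S$ (equivalently, the evaluation map $\epsilon\colon S\to S^{wap}$ is injective). Hence for part (1) it suffices to show that the hypothesis $C_0(S)\subseteq wap(S)$ forces $wap(S)$ to separate the points of $S$, and for part (2) that $c_0(S)\subseteq wap(S)$ does the same in the discrete setting.

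First I would verify that $C_0(S)$ itself separates the points of $S$. Since $S$ is a semi-topological semigroup it carries a Hausdorff topology, and by assumption it is locally compact; thus for any two distinct points $x,y\in S$ the locally compact version of Urysohn's lemma yields a function $g\in C_{00}(S)\subseteq C_0(S)$ with $g(x)=1$ and $g(y)=0$. Consequently $C_0(S)$ separates the points of $S$. Combining this with the inclusion $C_0(S)\subseteq wap(S)$ shows at once that $wap(S)$ separates the points of $S$, since any $f\in C_0(S)$ witnessing $x\neq y$ already lies in $wap(S)$. The cited corollary then gives that $M_b(S)$ is a WAP-algebra, proving (1). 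For part (2), when $S$ is discrete each singleton $\{x\}$ is open and its indicator function belongs to $c_0(S)$; these indicators trivially separate points, so $c_0(S)\subseteq wap(S)$ again forces $wap(S)$ to separate points, and the equivalence of statements (2) and (4) in the same corollary yields that $\ell_1(S)$ is a WAP-algebra. (In fact (2) is just the specialization of (1) to discrete $S$, where $C_0(S)=c_0(S)$ and $M_b(S)=\ell_1(S)$.)

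I expect no substantial obstacle here: the entire content is the passage from the inclusion hypothesis to point separation, which is immediate once one knows that $C_0(S)$ (respectively $c_0(S)$) separates points. The only step requiring a little care is the invocation of the locally compact Urysohn lemma, which relies precisely on the Hausdorff and local compactness assumptions built into the definition of a locally compact semi-topological semigroup; everything else is a direct appeal to the characterization following Theorem~\ref{mwap}.
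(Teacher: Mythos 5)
Your proof is correct and takes essentially the same route as the paper: both arguments reduce the statement to the point-separation criterion (equivalently, injectivity of $\epsilon\colon S\to S^{wap}$) established in the corollary following Theorem~\ref{mwap}. The only difference is that where the paper simply cites \cite[Corollary 4.2.13]{BJM} for the injectivity of $\epsilon$, you supply the elementary argument directly — the locally compact Urysohn lemma shows $C_0(S)$ separates points, hence so does $wap(S)$ under the inclusion hypothesis — which is a harmless, indeed more self-contained, substitution.
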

\begin{proof}(1)
By \cite[Corollary 4.2.13]{BJM} the map $\epsilon:S\longrightarrow S^{wap}$ is one to one, thus $M_b(S)$ is a {\rm WAP}-algebra.

(2) follows from (1).
\end{proof}

Dales, Lau and Strauss \cite[Theorem 4.6, Proposition 8.3]{DLS} showed that for a semigroup $S$, $\ell^1(S)$ is a dual Banach algebra with respect to $c_0(S)$ if and only if $S$ is weakly cancellative. If $S$ is  left or right weakly cancellative semigroup, then $\ell^1(S)$ is a {\rm WAP}-algebra. The next example shows that the converse is not true, in general.
\begin{example}
Let $S=(\mathbb{N},\min)$ then $wap(S)=c_0(S) \oplus \mathbb{C} $. So $\ell^1(S)$ is a {\rm WAP}-algebra but $S$ is neither left nor right weakly cancellative.
In fact, for $f\in wap(S)$  and all sequences $\{a_n\}$, $\{b_m\}$ with distinct element in $S$, we have $\lim_mf(b_m)=\lim_m\lim_nf(a_nb_m)=\lambda=\lim_n\lim_mf(a_nb_m)=\lim_nf(a_n)$, for some $\lambda\in \mathbb{C}$. This means $f-\lambda\in c_0(S)$ and $wap(S)\subseteq c_0(S) \oplus \mathbb{C} $. The other inclusion is clear.
\end{example}

%In fact, for sequences $\{a_n\}$, $\{b_m\}$ with distinct element in $S$ and  $f\in c_0(S)$, we have %$\lim_nf(a_nb_m)=f(b_m)$. Hence, $\lim_m\lim_nf(a_nb_m)=0$. And Similarly,
 %$\lim_m\lim_nf(a_nb_m)=0=\lim_n\lim_mf(a_nb_m)$. Thus $c_0(S) \oplus \mathbb{C}\subseteq wap(S) $.

 If $\{x_n\}$ and $\{y_m\}$ are sequences in $S$ we obtain an infinite matrix $\{x_ny_m\}$ which has $x_ny_m$ as its entry in the $m$th row and $n$th column.
 As in \cite{BR}, a matrix is said to be of row type $C$ ( resp. column type $C$) if the rows  ( resp. columns ) of the matrix are all constant and distinct. A matrix is of type $C$ if it is constant or of row or column type $C$.

J.W.Baker and A. Rejali in  \cite[Theorem 2.7(v)]{BR} showed that $\ell^1(S)$ is Arens regular if and only if for each pair of sequences $\{x_n\}$, $\{y_m\}$  with distinct elements in $S$ there is a submatrix  of $\{x_ny_m\}$ of type $C$.

A matrix $\{x_ny_m\}$  is said to be upper triangular constant if $x_ny_m=s$ if and only if $m\geq n$ and it is lower triangular constant if  $x_ny_m=s$ if and only if $m\leq n$. A matrix $\{x_ny_m\}$  is said to be  $W$-type if every submtrix of $\{x_ny_m\}$ is  neither upper triangular constant nor lower triangular constant.

\begin{theorem}{\rm
Let $S$ be a semigroup. The following statements are equivalent:
\begin{enumerate}
  \item $c_0(S)\subseteq wap(S)$.%$\ell^1(S)$ is a {\rm WAP}-algebra;
  \item For each $s\in S$ and each pair $\{x_n\}$, $\{y_m\}$ of sequences in $S$,
  $$\{\chi_s(x_ny_m):n<m\}\cap \{\chi_s(x_ny_m):n>m\}\not=\emptyset;$$
\item For  each pair $\{x_n\}$, $\{y_m\}$ of sequences in $S$ with distinct elements, $\{x_ny_m\}$ is a  $W$-type matrix;

\item For every $s\in S$, every infinite set
    $B\subset S$ contains a finite subset $F$ such that $\cap\{sb^{-1}:b\in
    F\}\setminus (\cap\{sb^{-1}: b\in B\setminus F\})$ and $\cap\{b^{-1}s:b\in F\}\setminus (\cap\{b^{-1}s: b\in B\setminus F\})$ are finite.
\end{enumerate}
}\end{theorem}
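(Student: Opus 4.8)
The plan is to route all four conditions through the Grothendieck double limit criterion, viewing $c_0(S)$ as the closed linear span of the point masses $\chi_s$. Since $wap(S)$ is a norm-closed subspace of $\ell^\infty(S)$ and the finitely supported functions are norm dense in $c_0(S)$, condition (1) is equivalent to the statement that $\chi_s\in wap(S)$ for every $s\in S$. By the double limit criterion, $\chi_s\in wap(S)$ exactly when, for every pair of sequences $\{x_n\}$, $\{y_m\}$ in $S$, the iterated limits $\lim_n\lim_m\chi_s(x_ny_m)$ and $\lim_m\lim_n\chi_s(x_ny_m)$ coincide whenever both exist. As $\chi_s$ is $\{0,1\}$-valued, any disagreement forces the two iterated limits to be $1$ and $0$; the standard extraction then yields subsequences whose product matrix is constantly $1$ on the region $n<m$ and constantly $0$ on the region $n>m$ (or vice versa), which is precisely the case in which $\{\chi_s(x_ny_m):n<m\}$ and $\{\chi_s(x_ny_m):n>m\}$ are the disjoint singletons $\{1\}$ and $\{0\}$. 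Conversely such a split matrix manifestly produces unequal iterated limits. This establishes (1) $\Leftrightarrow$ (2).

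I would then treat (2) $\Leftrightarrow$ (3) as a purely combinatorial matter. By the previous step, failure of (2) for a value $s$ amounts to the existence of sequences whose product matrix is split by the diagonal into a constant-$s$ region and a constant-non-$s$ region; the only mismatch with the definitions of upper and lower triangular constant is the behaviour on the diagonal itself. I would eliminate this by passing to interlaced subsequences $n_1<m_1<n_2<m_2<\cdots$ of the column and row indices, which extracts a genuine upper (respectively lower) triangular constant submatrix while avoiding the diagonal altogether, and I would arrange distinctness of the entries of the two sequences by a further routine subsequence extraction. Hence (2) fails if and only if some product matrix carries a triangular constant submatrix, that is, if and only if (3) fails.

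It remains to connect these with the set-theoretic condition (4), and here I would translate triangularity into the language of the translate sets $sb^{-1}=\{t:tb=s\}$ and $b^{-1}s=\{t:bt=s\}$, using that $R_b\chi_s=\chi_{sb^{-1}}$ and that fixing the right (respectively left) factor in the product $x_ny_m$ turns a row (respectively column) degeneracy into membership assertions for these sets. The easy direction is transparent: if, say, $x_ny_m=s$ holds exactly when $n>m$, then taking $B=\{y_m\}$ one checks that every finite $F\subseteq B$ with largest index $k$ has $\{x_n:n>k\}\subseteq\bigcap_{b\in F}sb^{-1}$ while $\bigcap_{b\in B\setminus F}sb^{-1}$ meets none of the $x_n$, so the difference is infinite and (4) fails; the two clauses $sb^{-1}$ and $b^{-1}s$ of (4) correspond to the two orientations of the triangle. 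The reverse implication is where the main obstacle lies. Failure of (4) yields, for each finite $F$, only infinitely many $t$ with $tb=s$ throughout $F$ but $tb\neq s$ for some single $b\in B\setminus F$, whereas a triangular pattern demands simultaneous failure along an entire tail. Upgrading ``fails somewhere'' to ``fails everywhere along a subsequence'' calls for a careful iterated-extraction argument of the Ramsey type employed in \cite{BR}; I would organise it as a back-and-forth construction that alternately adjoins a new element of $B$ on which the current finite stock of candidate witnesses fails and a new candidate witnessing the infinitude, thereby assembling the triangular submatrix that contradicts (2).
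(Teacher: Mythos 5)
Your treatment of (1)$\Leftrightarrow$(2)$\Leftrightarrow$(3) is correct and is in substance the paper's own argument: the paper likewise reduces (1) to the statement that each $\chi_s$ lies in $wap(S)$ via the double limit criterion, and its proof of (3)$\Rightarrow$(1) is exactly the kind of extraction you describe (it inductively builds sequences $(a_n)$, $(b_m)$ with $a_nb_m=s$ precisely when $n\geq m$ by first-available-index choices, which is your interlacing argument in different clothing). Your observation that a split matrix forbids constant subsequences, so distinctness can be arranged, correctly closes that loop.

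The gap is in (4). The paper does not prove (4)$\Leftrightarrow$(1) at all: it cites Ruppert's Theorem 4, and your attempt at a self-contained proof does not succeed as described. Your easy direction (a triangular pattern destroys (4)) is fine. But in the hard direction, the hypothesis that $D(F)=\bigcap\{sb^{-1}:b\in F\}\setminus\bigcap\{sb^{-1}:b\in B\setminus F\}$ is infinite for every finite $F\subseteq B$ only tells you, for each candidate $t\in D(F)$, that the set $Z(t)=\{b\in B: tb\neq s\}$ is nonempty --- a single, unspecified witness. Your back-and-forth step ``adjoin a new element of $B$ on which the current finite stock of candidate witnesses fails'' needs a common element of $Z(t_1)\cap\cdots\cap Z(t_n)$; worse, to get a genuinely triangular matrix the entire tail $b_{n+1},b_{n+2},\dots$ must lie in every $Z(t_i)$. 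Nothing in the hypothesis provides this: at the purely combinatorial level one can arrange that every $Z(t)$ is a singleton, with different singletons for different $t$, and then $D(F)$ is infinite for every finite $F$ while no triangular submatrix whatsoever can be assembled from these elements. So the implication cannot be obtained by this bookkeeping alone; ruling out such configurations is precisely the content of Ruppert's theorem, whose proof exploits the semigroup structure rather than just the infinitude of the sets $D(F)$. (A smaller unaddressed point: failure of (4) gives, for each finite $F$, infinitude of \emph{one} of the two differences, a priori depending on $F$; one must first fix a single clause, e.g.\ by noting that $D(F)$ decreases as $F$ grows.) As it stands, the (4) part of your proposal is a plan, not a proof; either carry out Ruppert's construction in detail or do as the paper does and quote \cite{Ruppert}.
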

\begin{proof}(1)$\Leftrightarrow$ (2). For all $s\in S$,  $\chi_s\in wap(S)$ if and only if $$\{\chi_s(x_ny_m):n<m\}\cap \{\chi_s(x_ny_m):n>m\}\not=\emptyset.$$

 (3)$\Rightarrow$ (1)Let $c_0(S)\not\subseteq wap(S)$ then there are sequences $\{x_n\}$, $\{y_m\}$ in $S$ with distinct elements such that for some $s\in S$, $$1=\lim_m\lim_n\chi_s(x_ny_m)\not=\lim_n\lim_m\chi_s(x_ny_m)=0.$$

% Then for all integer $n$ except for a finite number, $\lim_m\chi_s(x_ny_m)=0$. So by omitting a finite number of $y_n$ if necessary, we have: for each $n$, only finitely many terms  $x_ny_m\not=s$ (for $m\in\mathbb{N}$). Likewise, for each fixed $m$ almost all terms $x_my_n=s$ (for $n\in\mathbb{N}$)

 Since $\lim_n\lim_m\chi_s(x_ny_m)=0$,  for $1>\varepsilon>0$ there is a $N\in\mathbb{N}$ such that for all $n\geq N$, $\lim_m\chi_s(x_my_n)<\varepsilon$. This implies for all $n\geq N$, $\lim_m\chi_s(x_my_n)=0$. Then for $n\geq N$, $1>\varepsilon>0$ there is a $M_n\in\mathbb{N}$ such that for all $m\geq M_n$ we have $\chi_s(x_my_n)<\varepsilon$. So if we omit finitely many terms, for all $n\in \mathbb{N}$ there is $M_n\in\mathbb{N}$ such that for all $m\geq M_n$ we have $x_my_n\not=s$. As a similar argument, for all $m\in \mathbb{N}$ there is $N_m\in\mathbb{N}$ such that for all $n\geq N_m$, $x_my_n=s$.

  Let $a_1=x_1$, $b_1$ be the first $y_n$ such that $a_1y_n=s$. Suppose $a_m, b_n$ have been chosen for $1\leq m,n<r$, so that $a_nb_m=s$ if and only if $n\geq m$. Pick $a_r$  to be the first $x_m$ not belonging to the finite set $\cup_{1\leq n\leq r}\{x_m:x_my_n=s\}$. Then $a_rb_n\not=s$ for $n<r$. Pick $b_r$ to be the first $y_n$ belonging to the cofinite set  $\cap_{1\leq n\leq r}\{y_n:x_my_n=s\}$. Then $a_nb_m=s$ if and only if $n\geq m$. The sequences $(a_m)$, $(b_n)$ so constructed satisfy $a_mb_n=s$ if and only if $n\geq m$.
 That is, $\{a_nb_m\}$ is not of $W$-type  and this is a contradiction.

(1)$\Rightarrow$ (3). Let there are sequences $\{x_n\}$, $\{y_m\}$ in $S$ such that $\{x_ny_m\}$ is not a  $W$-type matrix, (say) $x_ny_m=s$ if and only if $m\leq n$. Then $$1=\lim_m\lim_n\chi_s(x_ny_m)\not=\lim_n\lim_m\chi_s(x_ny_m)=0.$$
 So $\chi_s\not\in wap(S)$. Thus $c_0(S)\not\subseteq wap(S)$.

(4)$\Leftrightarrow$ (1)This is Ruppert criterion for $\chi_s\in wap(S)$,  see \cite[Theorem 4]{Ruppert}.

\end{proof}

 We conclude with some examples which show that some of the above results cannot be improved.
 \begin{examples}\ \
 \begin{enumerate}
 \item[(i)]
  Let $S=\mathbb{N}$. Then for $S$ equipped with $\min$ multiplication,  the semigroup algebra $\ell_1(S)$  is a WAP-algebra  but is not neither  Arens regular nor a dual Banach algebra. While, if we replace the $\min$ multiplication  with $\max$ then $\ell_1(S)$ is a dual Banach algebra (so a WAP-algebra) which is not Arens regular. If we change the multiplication of $S$ to the zero multiplication then the resulted semigroup algebra is Arens regular (so a WAP-algebra) which is not a dual Banach algebra. This describes the  interrelation  between the concepts of  being Arens regular  algebra, dual Banach algebra and  WAP-algebra.
\item[(ii)]
Let $S$ be the set of all sequences with $0,1 $ values. We equip
$S$ with coordinate wise multiplication. We denote by $e_n$ the
sequence with all zero unless a $1$ in the $n$-th place. Let
$s=\{x_n\}\in S$, and let $F_w(S)$ be the set of all elements of
$S$ such that $x_i=0$ for only finitely index $i$. It is easy to
see that $F_w(S)$ is countable.  Let $F_w(S)=\{s_1,s_2,\cdots\}$ .
 Recall that, every  element  $g\in \ell^\infty(S)$ can be denoted by
$g=\sum_{s\in S}g(s)\chi_s$, see \cite[p.65]{DL}.  Suppose
$$g=\sum_{s\in S\setminus F_w(S)}g(s)\chi_s$$ be in $wap(S)$, we
show that $g=0$.  Let $s=\{x_n\}\in S$, and $\{k\in
\mathbb{N}:x_k=0\}=\{k_1,k_2,\cdots\}$ be an infinite set. Put
$a_n=s+\sum_{j=1}^n e_{k_j}$ and
$b_m=s+\sum_{i=m}^{\infty}e_{k_i}$. Then $$a_nb_m=\left\{
\begin{array}{lr}
                                                       \sum_{j=m}^ne_{k_j}+s &\mbox{if}\quad m\leq n \\
                                                       s &\mbox{if}\quad m>n
                                                     \end{array}\right.
$$ Thus
$g(s)=\lim_n\lim_m g(a_nb_m)=\lim_m\lim_ng(a_nb_m)=\lim_mg(s+\sum_{i=m}^{\infty}e_{k_i})=0$.

 %$\ell^\infty(S)=\{f:S\rightarrow \mathbb{C}: \|f\|_\infty=\sup\{|f(s)|:s\in S\}<\infty\}$
In fact,
$$wap(S)=\{f\in \ell^\infty(S):f=\sum_{i=1}^\infty f(s_i)\chi_{s_i},\quad s_i \in F_w(S)\}\oplus \mathbb{C}$$

It is clear that $F_w(S)$ is the subsemigroup of $S$ and  $wap(F_w(S))=\ell^\infty(F_w(S))$. So $\ell^1(F_w(S))$ is Arens regular. Let $T$ consists of  those sequences $s=\{x_n\}\in S$ such that $x_i=0$ for  infinitely index $i$, then $T$ is a subsemigroup of $S$ and $wap(T)=\mathbb{C}$. Since $\epsilon_{|_T}: T\longrightarrow S^{wap}$ isn't  one to one, $\ell^1(S)$ is not a  {\rm WAP}-algebra. This shows that in general $\ell^1(S)$  need not be a  {\rm WAP}-algebra.

\item[(iii)]  If we equip   $S=\mathbb{ R}^2$ with the multiplication  $(x,y).(x',y')=(xx',x'y+y')$, then $M_b(S)$ is not a WAP-algebra. Indeed, every non-constant function $f$ over $x$-axis is not in $wap(S)$. Let $f(0,z_1)\not=f(0,z_2)$ and $\{x_m\}$,$\{y_m\}$, $\{\beta_n\}$ be sequences with distinct elements satisfying the recursive equation   \[\beta_nx_m+y_m=\frac{mz_1+nz_2}{m+n}\]
Then
\begin{eqnarray*}
\lim_n\lim_mf((0,\beta_n).(x_m,y_m))&=&\lim_n\lim_mf(0,\beta_nx_m+y_m)\\
&=&\lim_n\lim_mf(0,\frac{mz_1+nz_2}{m+n})\\
&=&f(0,z_1)
\end{eqnarray*}
and similarly
$$\lim_m\lim_nf((0,\beta_n).(x_m,y_m))=f(0,z_2).$$

Thus the map $\epsilon:S\longrightarrow S^{wap}$ isn't  one to one, so $M_b(S)$ is not a WAP-algebra.
This shows that in general $M_b(S)$  need not be a  {\rm WAP}-algebra.

  \item[(iv)]
    Let $S$ be the interval $[\frac{1}{2},1]$ with multiplication $x.y=\max\{\frac{1}{2},xy\}$, where $xy$ is the ordinary multiplication on $\mathbb{R}$. Then for all $s\in S\setminus\{\frac{1}{2}\}$, $x\in S$, $x^{-1}s$ is finite. But $x^{-1}\frac{1}{2}=[\frac{1}{2},\frac{1}{2x}]$. Let $B=[\frac{1}{2},\frac{3}{4})$. Then for all finite subset $F$ of $B$,
    $$\bigcap_{x\in F}x^{-1}\frac{1}{2}\setminus \bigcap_{x\in B\setminus F}x^{-1}\frac{1}{2}=[\frac{2}{3},\frac{1}{2x_F}]$$
 where $x_F=\max F$. By \cite[Theorem 4]{Ruppert}   $\chi_{\frac{1}{2}}\not\in wap(S)$. So $c_0(S\setminus\{\frac{1}{2}\})\oplus \mathbb{C}\subsetneqq wap(S)$. It can be readily verified that $\epsilon:S\longrightarrow S^{wap}$ is   one to one, so $\ell_1(S)$ is a WAP-algebra but $c_0(S)\not\subseteq wap(S)$. This is a counter example for the converse of Corollary \ref{homomo}.

 \item[(v)] Take $T=(\mathbb{N}\cup\{0\},.)$ with 0  as zero
 of $T$ and the multiplication defined by

$$n.m=\left\{
\begin{array}{lr}
                                                     n & \mbox{if}\quad n=m \\
                                                       0 &
                                                       \mbox{otherwise}.
                                                     \end{array}\right.
$$

Then $S=T\times T$ is a semigroup with coordinate wise multiplication.  Now let
$X=\{(k,0): k\in T\}$, $Y=\{(0,k):k\in T\}$ and $Z=X\cup Y$. We
use the Ruppert criterion \cite{Ruppert} to show that
$\chi_z\not\in wap(S)$, for each $z\in Z$. Let $B=\{(k,n): k,n\in
T\}$, then $(k,n)^{-1}(k,0)=\{(k,m):m\not= n\}=B\setminus
\{(k,n)\}$. Thus for all finite subsets $F$ of $B$,
\begin{eqnarray*}
%\nonumber to remove numbering (before each equation)
\left(\cap\{(k,n)^{-1}(k,0): (k,n)\in F\}\right)&\setminus&(
\cap\{(k,n)^{-1}(k,0):
(k,n)\in B\setminus F\})\\
&=& \left(\cap\{(k,0)(k,n)^{-1}: (k,n)\in F\}\right ) \\
&\setminus &(\cap\{(k,n)^{-1}(k,0): (k,n)\in B\setminus F\}) \\
&=& (B\setminus F) \setminus F=B\setminus F
\end{eqnarray*}
and the last set is infinite. This means $\chi_{(k,0)}\not\in wap(S)$.  Similarly $\chi_{(0,k)}\not\in wap(S)$.  Let $f=\sum_{n=0}f(0,n)\chi_{(0,n)}+\sum_{m=1}^\infty f(m,0)\chi_{(m,0)}$ be in $ wap(S)$. For arbitrary fixed $n$ and sequence $\{(n,k)\}$ in $S$, we have $\lim_k f(n,k)=\lim_k\lim_lf(n,l.k)=\lim_l\lim_kf(n,l.k)=f(n,0)$ implies $f(n,0)=0$. Similarly $f(0,n)=0$ and $f(0,0)=0$. Thus $f=0$.   In fact $wap(S)\subseteq \ell^\infty(\mathbb{N}\times\mathbb{N})$. Since $wap(S)$ can not separate the points of $S$ so $\ell_1(S)$ is not a WAP-algebra.
  Let $\omega(n,m)=2^n3^m$ for $(n,m)\in S$. Then $\omega$ is a weight on $S$  such that $\omega\in wap(S,1/\omega)$, so the evaluation map $\epsilon:S\longrightarrow \tilde {X}$ is   one to one. This means $\ell_1(S,\omega)$ is a WAP-algebra but $\ell_1(S)$ is not a WAP-algebra. This is a counter example for the converse of Corollary \ref{weighted}.
\end{enumerate}
\end{examples}

\noindent{{\bf Acknowledgments.}}  This research was supported by
the Center of Excellence for Mathematics at Isfahan university.

\bigskip

%%%%%%%%%%%

\end{document}